\newtheorem{theorem}{Theorem}
[section]
\newtheorem{conjecture}[theorem]{Conjecture}
\newtheorem{corollary}[theorem]{Corollary}
\newtheorem{definition}[theorem]{Definition}
\newtheorem{lemma}[theorem]{Lemma}
\newtheorem{proposition}[theorem]{Proposition}
\theoremstyle{remark}
\newtheorem{remark}[theorem]{Remark}
\newtheorem{example}[theorem]{Example}
\def\CC{\mathbb{C}}
\def\NN{\mathbb{N}}
\def\QQ{\mathbb{Q}}
\def\ZZ{\mathbb{Z}}
\title{On periodicity of $p$-adic Browkin continued fractions}
\author[L. Capuano]{Laura Capuano} 
\address{DISMA ``Luigi Lagrange'', Politecnico di Torino, Corso Duca degli Abruzzi 24, 10129
Torino, Italy}
\email{laura.capuano@polito.it}
\author[N. Murru]{Nadir Murru} 
\address{Dipartimento di Matematica, Università di Trento, Via Sommarive 14, 38123, Povo (TN), Italy}
\email{nadir.murru@gmail.com}
\author[L. Terracini]{Lea Terracini} 
\address{Dipartimento di Matematica ``G. Peano'', Università di Torino, Via Carlo Alberto 10, 10123, Torino, Italy}
\email{lea.terracini@unito.it}
\date{}
\subjclass[2010]{11J70, 11D88, 11Y65.} 
\keywords{$p$-adic continued fractions, periodicity, quadratic irrationals.}
\begin{document}

\maketitle

\begin{abstract}
The classical theory of continued fractions has been widely studied for centuries for its important properties of good approximation, and more recently it has been generalized to $p$-adic numbers where it presents many differences with respect to the real case. In this paper we investigate periodicity for the $p$-adic continued fractions introduced by Browkin. We give some necessary and sufficient conditions for periodicity in general, although a full characterization of $p$-adic numbers having purely periodic Browkin continued fraction expansion is still missing. In the second part of the paper, we describe a general procedure to construct square roots of integers having periodic Browkin $p$-adic continued fraction expansion of prescribed even period length. As a consequence, we prove that, for every $n \ge 1$, there exist infinitely many  $\sqrt{m}\in \QQ_p$ with periodic Browkin expansion of period $2^n$, extending a previous result of Bedocchi obtained for $n=1$.
\end{abstract}

\section{Introduction}

Continued fractions provide a representation for any real number as an integer sequence by means of the Euclidean algorithm. They have been widely studied for centuries due to their important properties and applications. In particular, they give the best rational approximations of real numbers and periodic continued fractions characterize quadratic irrationals. For these reasons, it may be interesting to define continued fractions and exploit their usefulness in other fields. In the case of $p$-adic continued fractions, there is no
straightforward generalization of the real algorithm since there is no canonical $p$-adic analogue of the integral part, and several authors as Mahler \cite{Mahler}, Schneider \cite{Sch}, Ruban \cite{Rub} and Browkin \cite{Browkin78}, \cite{Browkin00} proposed different algorithms in the attempt of recovering the same nice properties which hold in the real case. In particular, Browkin definition differs from the other ones because the partial quotients of the expansion are not always positive; in this way, they provide finite expansions for every rational number unlike Ruban and Schneider ones, where rational numbers can have either finite or periodic $p$--adic continued fraction expansion \cite{Lao, Poo, Wang}. Very recently Browkin algorithm has been also extended to multidimensional continued fractions \cite{MT2019, MT2020}. \\

Periodicity of $p$--adic continued fractions is an intriguing feature as well as in the real case. It is easy to show that in general, to be quadratic over $\QQ$ is a necessary condition for periodicity, but the problem of deciding whether this condition is also sufficient is still unknown in full generality. In the case of Schneider continued fractions, it has been proved that the expansion of a $p$-adic unit is periodic if and only if $\alpha_k \cdot \alpha^c_k < 0$, for every complete quotient $\alpha_k$, where $\alpha^c_k$ is its algebraic conjugate, see \cite{Poo, Til, Weg}. A similar property holds also for Ruban continued fractions; namely, the expansion of a quadratic irrational $\alpha$ is periodic if and only if $\QQ(\alpha)$ can be embedded into the reals and $\alpha_k \cdot \alpha_k^c<0$ for every $k$ sufficiently large. 
Indeed, in \cite{Ooto} Ooto showed that $\sqrt m$ with $m<0$ cannot have periodic Ruban continued fraction expansion, and not so much later Capuano, Veneziano and Zannier \cite{CVZ2019} gave an effective criterion for the periodicity of quadratic irrationals. This criterion heavily depends on the property that, for Ruban continued fractions, the partial quotients are always positive. 
In the case of Browkin $p$-adic continued fractions the question appears to be more delicate, and up to our knowledge it is not known whether an analogous of Lagrange's theorem holds. In this context, Bedocchi \cite{Bedocchi1988} proved an analogue of Galois theorem, i.e., if $\alpha \in \mathbb Q_p$ has a periodic expansion, then it is purely periodic if and only if $\lvert \alpha \rvert_p > 1$ and $\lvert \alpha^c \rvert_p < 1$, where $\lvert \cdot \rvert_p$ is the $p$--adic norm. In the first part of the paper, we study periodicity of Browkin continued fraction expansions; more specifically, we first give some properties of $p$-adic numbers with purely periodic Browkin continued fraction expansions, using some simpler arguments with respect to the original ones used by Bedocchi, and we extend some results that hold in the Ruban case, giving some sufficient conditions for periodicity. \\

Another interesting problem in this setting is the study of the possible lengths appearing as periods of square roots of integers in $\QQ_p$. If $m \in \mathbb Z$ and $\sqrt{m} \in \mathbb Q_p$ has periodic Browkin continued fraction expansion, then the length of the pre-period is at most $2$, for $p$ odd. Looking at the lengths of the periods, in \cite{Bedocchi1989}, Bedocchi proved that $\sqrt{m}$ has never period $1$ and found infinitely many square roots of integers in $\QQ_p$ having periodic expansion with period of length $2$. On the other hand, in \cite{Bedocchi1990} he proved that, given an odd integer $d>0$, there are at most finitely many $\sqrt{m}\in \QQ_p$ with $m\in \ZZ$ such that the $p$-adic continued fraction expansion is periodic with period of length $d$. 

In the second part of the paper, we continue this investigation focusing on square roots of integers with even period. In particular, we give a general construction that, starting from a suitable finite continued fraction $[a_0, \ldots, a_{t-1}]$ (that we call \textit{nice}) of length $t$, provides infinitely many $\alpha=p^k \sqrt m$ with $m\in \ZZ$ and $k\ge 1$ with Browkin continued fraction expansion $[0, a_0, \overline{a_1, \ldots, a_{t-1},a_t, a_{t-1}, \ldots, a_1, 2a_0}]$, i.e. of period length $2t$. Using this method we are able to construct, for every $n >0$, infinitely many square roots of integers in $\QQ_p$ having periodic Browkin continued fraction expansion with period length $2^n$. This in particular extends previous results of Bedocchi \cite{Bedocchi1989} on square roots of integers with periodic expansion of period $2$. We finally conjecture that the same should be true for every even length.


\subsection*{Acknowledgements}
The three authors are members of the INdAM group GNSAGA. The first author is also member of DISMA, Politecnico di Torino, Dipartimento di Eccellenza MIUR 2018-2022.

\section{Preliminaries and notation} \label{sec:pre}
In this section we recall the definition of Browkin algorithm that we are going to use in the paper and we provide some preliminary and known results about these $p$--adic continued fractions.

In what follows we will fix a prime $p>2$ and we will denote by $\lvert \cdot \rvert_p$ and $\lvert \cdot \rvert_\infty$ the $p$--adic and the Euclidean norm respectively and by $v_p$ the usual $p$-adic valuation. 

To generalise the usual definition of continued fraction expansion for real numbers, we need first to define a good analogue of the integral part. For $p$-adic numbers there is no canonical way to define it, since, given $\alpha \in \QQ_p$ there are infinitely many $a\in \ZZ$ such that $0\le |\alpha-a|_p < 1$, and there is no canonical choice so that the analogues of theorems about real continued fractions hold. Many
authors (see \cite{Sch, Rub, Browkin78}) gave different definitions of $p$-adic continued fractions with the aim of recovering in this setting the same good properties holding for real numbers. In this paper, we will focus on the definition given by Browkin \cite{Browkin00, Browkin78}, that has the good property of recovering finiteness for $p$-adic continued fraction expansion of rational numbers. We will usually refer to this definition using the abbreviation $BCF$.

Let us define the set $\mathcal{Y}:=\ZZ\left [\frac 1 p\right ]\cap \left (-\frac p2,\frac p 2\right)$. Since $\mathcal{Y}\cap (\mathcal{Y}+p\ZZ)=\emptyset$, we have that $\mathcal{Y}$ is a discrete subset of $\QQ_p$, and given $\alpha \in \QQ_p$ there exists a unique $s(\alpha) \in \mathcal{Y}$ such that $0 \le |\alpha - s(\alpha)|_p <1$. 
We define the \emph{Browkin $s$-function} $s:\QQ_p\longrightarrow \mathcal{Y}$ as the function that associates to any $\alpha \in \QQ_p$ the corresponding $s(\alpha)$.

Given $\alpha \in \mathbb Q_p$, we can determine its $p$--adic continued fraction expansion using the following algorithm:
\begin{equation} \label{eq:algo}
\left\{ \begin{array}{lll}
\alpha_0&=&\alpha, \\
a_n &=& s(\alpha_n),\\ 
\alpha_{n+1}&= & \frac 1 {\alpha_{n}-a_n} \ \ \mbox{if } \alpha_n-a_n \neq 0.
\end{array}\right.
\end{equation}
The $a_n$'s and $\alpha_n$'s are called \emph{partial} and \emph{complete quotients}, respectively.
Analogously to the real case, we define the sequences $(A_n)_{n=-1}^\infty$, $(B_n)_{n=-1}^\infty$ by
\begin{equation*} 
A_{n}=a_nA_{n-1}+A_{n-2}, \quad B_{n}=a_nB_{n-1}+B_{n-2}, \quad n = 1, 2, \ldots,
\end{equation*}
with initial conditions
\begin{equation*} 
A_{-1}=1,\quad A_0=a_0, \quad B_{-1}=0, \quad B_0=1.
\end{equation*}
Using matrices, we can write
$$
 \mathcal{A}_n =\begin{pmatrix} a_n &1 \\
   1 &0 \end{pmatrix},\quad\quad\quad
   \mathcal{B}_n =\begin{pmatrix} {A_{n}} &{A_{n-1}} \\
  {B_{n}} &{B}_{n-1} \end{pmatrix}, 
$$
and we have
\begin{equation}\mathcal{B}_n=\mathcal{B}_{n-1}\mathcal{A}_n
 =\mathcal{A}_0\mathcal{A}_1\ldots \mathcal{A}_n.
\label{eq:matriciale}
\end{equation}
This easily implies by induction the useful relation
\begin{equation} \label{eq:ABrel}
A_n B_{n-1}- B_n A_{n-1}=(-1)^{n+1}  \quad \mbox{for every } n\ge 0.
\end{equation}
Moreover, for every $k \ge 0$ we have 
\begin{equation} \label{eq:alpha}
    \alpha = \cfrac{\alpha_{k}A_{k-1}+A_{k-2}}{\alpha_{k}B_{k-1}+B_{k-2}}.
\end{equation}
For every $n\in \NN$ the quotient $Q_n = \frac{A_n}{B_n}$ gives the
$n$-th convergent of the continued fraction expansion, i.e.
$$ Q_n=[a_0, \ldots, a_n]. $$
Notice that by construction we have
that 
\[ \lvert a_n \rvert_p =\lvert \alpha_n \rvert_p, \quad \lvert a_n \rvert_p > 1, \]
for every $n \geq 1$, and 
\[
\lvert \alpha \rvert_p = \lvert a_0 \rvert_p= \lvert Q_n \rvert_p,
\]
for every $n \geq 0$.
We define the following useful quantities:
\begin{equation*}\label{eq:kappa} k_i:=k_i(\alpha)=-v_p(\alpha_i),\quad K_n:=K_n(\alpha)=\sum_{i=1}^nk_i,\quad K'_n:=K'_n(\alpha)=K_n+k_0;\end{equation*} in this way, for every $n\ge 0$ we have
\[ v_p(A_n) = -K'_n, \quad v_p(B_n) = -K_n,\]
and
\begin{equation} \label{eq:v_p}
v_p(Q_n-\alpha) = 2K_n + k_{n+1}\ge 2n+1.
\end{equation}

By \eqref{eq:v_p} the sequence of convergents $\{Q_n\}_{n\ge 0}$ is a Cauchy sequence with respect to the $p$-adic metric and converges to $\alpha$ (see  \cite{Browkin78}). We denote its limit by 
\[ [a_0, a_1, \ldots]= a_0 + \cfrac{1}{a_1 +\cfrac{1}{\ddots}}, \] 
where, as said before, the limit is computed using the $p$-adic metric. We shall refer to this expansion  as the Browkin continued fraction ($BCF$) for $\alpha$.

For what follows, it will be also useful to consider the sequences $(\tilde A_n)_{n=-1}^\infty$ and $(\tilde B_n)_{n=-1}^\infty$, where we denote by $\tilde x$ the prime-to-$p$ part of an element $x\in \ZZ\left [\frac{1}{p} \right ]$, i.e., the numerator of the number. Since $v_p(a_n)=-k_n$, we have that $\tilde{a}_n=p^{k_n} a_n$; moreover, the following recurrence formulas hold:
\begin{equation} \label{eq:ABtilde}
\tilde A_{n}=\tilde a_n \tilde A_{n-1}  + p^{k_n+k_{n-1}} \tilde A_{n-2}, \quad \tilde B_{n}=\tilde a_n \tilde B_{n-1}  + p^{k_n+k_{n-1}} \tilde B_{n-2}, \quad n = 1, 2, \ldots,
\end{equation} 
with initial conditions
\begin{equation*} 
\tilde A_{-1}=1,\quad \tilde A_0=\tilde a_0, \quad \tilde B_{-1}=0, \quad \tilde B_0=1.
\end{equation*}

We conclude this section with the following proposition, which proves that if two $p$-adic numbers have the same $n$-th convergent then they are sufficiently close with respect to the $p$-adic metric. Namely, we have the following:

\begin{proposition}\label{prop:dueenne}
If $\alpha,\beta\in \QQ_p$ have the same $n$-th convergent in the $p$-adic expansion, i.e. $Q_n^\alpha=Q_n^\beta$, then $|\alpha-\beta|_p<\frac 1 {p^{2n}}$.
\end{proposition}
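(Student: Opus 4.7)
The statement is an immediate consequence of the bound \eqref{eq:v_p} combined with the non-Archimedean triangle inequality, so my plan is very short.

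The idea is to use $Q_n^\alpha = Q_n^\beta$ as the ``bridge'' between $\alpha$ and $\beta$. From \eqref{eq:v_p} applied to each of $\alpha$ and $\beta$ one gets
\[ v_p(\alpha - Q_n^\alpha) = 2K_n(\alpha) + k_{n+1}(\alpha) \ge 2n+1, \qquad v_p(\beta - Q_n^\beta) = 2K_n(\beta) + k_{n+1}(\beta) \ge 2n+1, \]
since each $k_i \ge 1$ for $i \ge 1$ (recall $|a_n|_p = |\alpha_n|_p > 1$ for $n \ge 1$, so $k_i = -v_p(\alpha_i) \ge 1$), giving $K_n \ge n$ and $k_{n+1} \ge 1$. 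This is the only place where I use the hypotheses on the Browkin expansion.

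Now I would just write $\alpha - \beta = (\alpha - Q_n^\alpha) - (\beta - Q_n^\beta)$, since by assumption $Q_n^\alpha - Q_n^\beta = 0$, and apply the ultrametric inequality:
\[ v_p(\alpha - \beta) \;\ge\; \min\bigl(v_p(\alpha - Q_n^\alpha),\, v_p(\beta - Q_n^\beta)\bigr) \;\ge\; 2n+1. \]
Hence $|\alpha - \beta|_p \le p^{-(2n+1)} < p^{-2n}$, which is exactly the claimed bound.

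There is really no obstacle: the only subtlety worth remarking on is that the bound in \eqref{eq:v_p} depends a priori on the individual sequences $(k_i)$ of $\alpha$ and $\beta$, but one only needs the uniform lower bound $2n+1$ coming from $k_i \ge 1$, which holds for every Browkin expansion; this makes the argument completely symmetric in $\alpha$ and $\beta$.
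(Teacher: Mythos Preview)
Your proof is correct, and it is shorter than the paper's. The paper proceeds instead by induction on $n$: it first notes that $Q_n^\alpha=Q_n^\beta$ forces the first $n+1$ partial quotients of $\alpha$ and $\beta$ to agree, then passes to the tails $\alpha_1,\beta_1$ (which share their $(n-1)$-th convergent), applies the inductive hypothesis to get $|\alpha_1-\beta_1|_p<p^{-2(n-1)}$, and divides by $|\alpha_1\beta_1|_p\ge p^2$ to conclude. Your approach bypasses the induction entirely by invoking the already-established estimate \eqref{eq:v_p} for each of $\alpha$ and $\beta$ separately and then using the ultrametric inequality with $Q_n^\alpha=Q_n^\beta$ as the common midpoint; this is cleaner and even yields the slightly sharper bound $|\alpha-\beta|_p\le p^{-(2n+1)}$. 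The one thing the paper's argument makes explicit along the way---that equality of $n$-th convergents is equivalent to equality of the first $n+1$ partial quotients---is used later (in the proof of Proposition~\ref{prop:esisteneff}), so if you adopt your shortcut you might want to record that fact separately.
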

\begin{proof}
 First notice that the hypothesis $Q_n^\alpha=Q_n^\beta$ is equivalent to say that the first $n+1$ partial quotients of $\alpha$ and $\beta$ are the same. We argue by induction on $n$. 
 
 The claim is certainly true for $n=0$, so assume that $n\geq 1$ and $Q_n^\alpha=Q_n^\beta$. This implies that $a_0=s(\alpha)=s(\beta)$ and $Q_{n-1}^{\alpha_1}=Q_{n-1}^{\beta_1}$, hence $|\alpha_1-\beta_1|_p<\frac 1 {p^{2(n-1)}}$. Since $|\alpha_1\beta_1|_p\geq p^2$ we have
 $$|\alpha-\beta|_p=\left |a_0+\frac 1{\alpha_1}-a_0 - \frac 1 {\beta_1}\right |_p=\frac  {|\alpha_1-\beta_1|_p} {|\alpha_1|_p|\beta_1|_p}< \frac 1 {p^{2(n-1)+2}}= \frac 1 {p^{2n}},$$
 proving the claim.
\end{proof}

\section{Regular quadratic irrationalities} \label{sec:reg}

In this section we focus on quadratic irrational numbers, providing some results about the periodicity of the Browkin algorithm described in Section \ref{sec:pre}.

This problem was one of the main questions raised by Browkin \cite{Browkin78}, who was interested in finding a suitable algorithm for $p$-adic continued fraction satisfying good properties of finiteness and periodicity. In \cite{Browkin78}, Browkin proved that its algorithm  satisfies the good property that $\alpha \in \QQ_p$ has finite $BCF$ if and only if it is rational. On the other hand, he provided some examples of quadratic irrationals which seem not to have periodic $BCF$ expansion, but the problem of deciding whether a quadratic irrational has periodic $BCF$ expansion is still open. This is instead known for other $p$-adic continued fractions; for example, in the case of Ruban continued fraction expansion \cite{Rub}, which is defined using the same algorithm \eqref{eq:algo} but another $s$-function, namely taking the set $\ZZ\left[ \frac{1}{p} \right]$ in place of $\mathcal{Y}$, an effective criterion to decide if a quadratic irrational has periodic continued fraction expansion was given by Capuano, Veneziano and Zannier in \cite{CVZ2019}. Unfortunately, the same criterion does not apply in our case since it strictly depends on the fact that in Ruban expansion $a_n>0$ for all $n\ge 0$. The behaviour of the two expansions is really different; for more, see \cite{CVZ2019}. 
\medskip

It is easy to show that, if $\alpha \in \QQ_p$ has periodic $BCF$ expansion, then it is quadratic irrational, and this depends only on the recurrence formulas satisfied by the convergents. First of all notice that, if $\alpha$ is quadratic over $\QQ$, then all the complete quotients $\alpha_k$ will be quadratic irrationals and will lie in $\QQ(\alpha)$. If the expansion of $\alpha$ is periodic, then one of its complete quotients will be purely periodic, hence without loss of generality we can assume $\alpha$ itself to be purely periodic. Then, there exists $k > 0$ such that $\alpha=\alpha_{k}$. By \eqref{eq:alpha} we have 
\begin{equation*}
    B_{k-1}\alpha^2-(A_{k-1}-B_{k-2})\alpha-A_{k-2}=0.
\end{equation*}
This implies that $[\QQ(\alpha):\QQ]\le 2$; but we know that if $\alpha \in \QQ$, then the BFC expansion of $\alpha$ is finite, which gives that $\alpha$ is quadratic irrational.

In the following, $\alpha \in \QQ_p$ will be quadratic over $\QQ$ and, for any element $x$ in $\QQ[\alpha]$, we will denote by $x^c$ the conjugate of $x$.

\begin{proposition}\label{prop:svilconj}
Assume that $v_p(\alpha)<0$ and $v_p(\alpha^c)>0$. Then, for every $n\geq 0$,
\begin{itemize} 
\item[a)] $v_p(\alpha_n)<0$ and $v_p(\alpha^c_n)>0$;
    \item[b) ]
$-\frac 1 {\alpha_{n+1}^c}=\left [a_n,\ldots, a_0, -\frac 1 {\alpha_{0}^c}\right ]$.
\end{itemize}
\end{proposition}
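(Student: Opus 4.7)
The plan is to prove the two assertions jointly by induction on $n$, with part (a) ensuring that every quantity appearing in part (b) is well defined. The base case $n=0$ of (a) is exactly the hypothesis of the proposition. Note that $\alpha_n$ is always a quadratic irrational in $\QQ(\alpha)$ because the Browkin transformations are M\"obius maps with rational coefficients; in particular $\alpha_n^c\ne a_n$, so the conjugate recurrence $\alpha_{n+1}^c=1/(\alpha_n^c-a_n)$ (obtained by conjugating $\alpha_{n+1}=1/(\alpha_n-a_n)$ and using $a_n\in\QQ$) makes sense.

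\emph{Part (a).} Assume inductively that $v_p(\alpha_n)<0$ and $v_p(\alpha_n^c)>0$. By the defining property of the Browkin $s$-function, $v_p(\alpha_n-a_n)\ge 1$; together with $v_p(\alpha_n)<0$, the ultrametric inequality forces $v_p(a_n)=v_p(\alpha_n)<0$. Consequently $v_p(\alpha_{n+1})=-v_p(\alpha_n-a_n)\le -1<0$. For the conjugate, $v_p(\alpha_n^c)>0>v_p(a_n)$ yields $v_p(\alpha_n^c-a_n)=v_p(a_n)<0$, so $v_p(\alpha_{n+1}^c)=-v_p(a_n)>0$, as needed.

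\emph{Part (b).} The base case $n=0$ is the identity
\[
\left[a_0,\,-\tfrac{1}{\alpha_0^c}\right]=a_0-\alpha_0^c=-\tfrac{1}{\alpha_1^c},
\]
which is immediate from $\alpha_1^c=1/(\alpha_0^c-a_0)$. For the induction step, assuming $-1/\alpha_n^c=[a_{n-1},\ldots,a_0,-1/\alpha_0^c]$, I unfold the continued fraction one level:
\[
\left[a_n,a_{n-1},\ldots,a_0,-\tfrac{1}{\alpha_0^c}\right]=a_n+\frac{1}{\left[a_{n-1},\ldots,a_0,-\tfrac{1}{\alpha_0^c}\right]}=a_n-\alpha_n^c=-\tfrac{1}{\alpha_{n+1}^c},
\]
where the last equality is again the conjugate recurrence.

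I do not foresee a substantive obstacle: the statement is a structural unpacking of the Browkin recursion combined with the ultrametric inequality. The only care needed is in the valuation bookkeeping — in particular, verifying that the strict inequalities $v_p(\alpha_n^c)>0$ and $v_p(\alpha_n)<0$ both propagate through the inductive step — and in checking the well-definedness of the various inverses, which follows from $\alpha_n\notin\QQ$.
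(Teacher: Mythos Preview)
Your proof is correct and follows essentially the same inductive route as the paper: part (a) via the ultrametric inequality applied to the conjugate recursion, and part (b) via the identity $-1/\alpha_{n+1}^c = a_n + 1/(-1/\alpha_n^c)$. The one point the paper makes explicit that you leave implicit is that, since $v_p(\alpha_n^c)>0$, one actually has $s(-1/\alpha_{n+1}^c)=a_n$; this upgrades the algebraic identity in (b) to the statement that $[a_n,\ldots,a_0,-1/\alpha_0^c]$ is the genuine Browkin expansion of $-1/\alpha_{n+1}^c$, which is how the proposition is used later (e.g.\ in comparing partial quotients in Proposition~\ref{prop:regular}).
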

\begin{proof} $a)$ By construction $v_p(\alpha_{n})<0$ for every $n\geq 1$. For $\alpha_n^c$ we have inductively $$v_p(\alpha_{n}^c)=-v_p(\alpha_{n-1}^c-a_{n-1})=-v_p(a_{n-1})=-v_p(\alpha_{n-1})>0.$$
$b)$ Let us put $\beta_n=-\frac 1 {\alpha_{n}^c}$. By conjugating the expression
\[ \alpha_n = a_n +\cfrac{1}{\alpha_{n+1}}, \]
we get $-\frac 1{\alpha_{n+1}^c}=a_n-\alpha_n^c$, hence $\beta_{n+1}=a_n+\frac 1{\beta_n} $ for $n\geq 0$. Moreover, by $a$),  $v_p(\alpha_n^c)>0$ for $n\geq 0$, so that $s(\beta_{n+1})=a_n$ proving the claim.
\end{proof}

We will call $\alpha$ \emph{regular} if $v_p(\alpha)<0$ and $v_p(\alpha^c)>0$. By Proposition \ref{prop:svilconj}, if $\alpha$ is regular then $\alpha_n$ is regular for every $n \ge 0$. Moreover, in this case
\begin{equation*} v_p(\alpha_{n+1}^c)=-v_p(\alpha_n), \quad v_p(tr(\alpha))=v_p(\alpha),\quad v_p(N(\alpha))>v_p(\alpha),
\end{equation*}
where $tr(\cdot)$ and $N(\cdot)$ denote the trace and the norm of a quadratic irrational.

\begin{proposition}\label{prop:esisteneff}
 If $|\alpha-\alpha^c|_p \geq  \frac 1 {p^{2n}}$, then $\alpha_{n+2}$ is regular.
\end{proposition}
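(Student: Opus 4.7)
The plan is to verify separately the two defining conditions of regularity: $v_p(\alpha_{n+2})<0$ and $v_p(\alpha_{n+2}^c)>0$. The first is immediate, since the BCF algorithm ensures $|\alpha_k|_p>1$ for every $k\ge 1$ by construction: each such $\alpha_k$ is the reciprocal of a quantity of strictly positive $p$-adic valuation.

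For the second, the first step is to invert \eqref{eq:alpha} to obtain an explicit formula for a complete quotient in terms of $\alpha$ and the convergents:
$$\alpha_k \;=\; \frac{A_{k-2}-\alpha B_{k-2}}{\alpha B_{k-1}-A_{k-1}}.$$
Since the $A_i$ and $B_i$ are rational, Galois conjugation over $\QQ(\alpha)$ is legal and yields
$$\alpha_k^c \;=\; \frac{B_{k-2}\,(Q_{k-2}-\alpha^c)}{B_{k-1}\,(\alpha^c-Q_{k-1})}.$$
Applying $v_p$ with $k=n+2$ and using $v_p(B_{n+1})-v_p(B_n)=-k_{n+1}$, this becomes
$$v_p(\alpha_{n+2}^c)\;=\;k_{n+1}\,+\,v_p(\alpha^c-Q_{n+1})\,-\,v_p(\alpha^c-Q_n),$$
so the task reduces to showing that the two $v_p(\alpha^c-Q_j)$ terms cancel, which will give $v_p(\alpha_{n+2}^c)=k_{n+1}\ge 1>0$.

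This is where the hypothesis enters. By \eqref{eq:v_p}, together with the bound $K_j\ge j$ that follows from $k_i\ge 1$ for all $i\ge 1$, one has $v_p(\alpha-Q_n)\ge 2n+1$ and $v_p(\alpha-Q_{n+1})\ge 2n+3$; the hypothesis gives $v_p(\alpha-\alpha^c)\le 2n$, which is strictly less than both. Writing $\alpha^c-Q_j=(\alpha^c-\alpha)+(\alpha-Q_j)$ and applying the ultrametric inequality, one concludes $v_p(\alpha^c-Q_j)=v_p(\alpha-\alpha^c)$ for both $j=n$ and $j=n+1$, so these terms cancel in the formula above.

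There is no real obstacle here beyond bookkeeping: the statement is essentially a robustness fact saying that once one has iterated the BCF algorithm far enough (to index $n+2$) so that the approximation error $v_p(\alpha-Q_j)$ strictly exceeds the distance $v_p(\alpha-\alpha^c)$, the complete quotient automatically becomes regular. The only point that needs attention is verifying that the strict gap between $2n$ and $2n+1$ really is enough to invoke the ultrametric at both $j=n$ and $j=n+1$, which is exactly where the implicit estimate $K_{n+1}\ge n+1$ is used.
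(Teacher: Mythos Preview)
Your argument is correct and takes a genuinely different route from the paper. There is one harmless slip: from $\alpha_k^c=\dfrac{B_{k-2}(Q_{k-2}-\alpha^c)}{B_{k-1}(\alpha^c-Q_{k-1})}$ with $k=n+2$ one actually obtains
\[
v_p(\alpha_{n+2}^c)=k_{n+1}+v_p(\alpha^c-Q_n)-v_p(\alpha^c-Q_{n+1}),
\]
i.e.\ the two $v_p(\alpha^c-Q_j)$ terms appear with the opposite signs to those you wrote. Since your next step shows both equal $v_p(\alpha-\alpha^c)$, the difference vanishes either way and the conclusion $v_p(\alpha_{n+2}^c)=k_{n+1}>0$ is unaffected.

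By way of comparison: the paper argues \emph{dynamically}. It expands both $\alpha$ and $\alpha^c$ as BCFs, invokes Proposition~\ref{prop:dueenne} (contrapositively) to locate the first index $k_0\le n$ at which the two expansions diverge, so that $\alpha_{k_0}\not\equiv\alpha_{k_0}^c\pmod p$; it then checks by a short case analysis on $|\alpha_1^c|_p$ that two further steps force regularity, and finally uses persistence of regularity (Proposition~\ref{prop:svilconj}) to reach index $n+2$. Your approach is \emph{static}: it bypasses Proposition~\ref{prop:dueenne}, the case analysis, and the persistence step entirely, computing $v_p(\alpha_{n+2}^c)$ directly from the inversion of \eqref{eq:alpha} combined with the approximation bound \eqref{eq:v_p} and the ultrametric inequality. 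This is shorter and yields the exact value $v_p(\alpha_{n+2}^c)=k_{n+1}$ rather than merely positivity; the paper's argument, on the other hand, makes the mechanism (divergence of the two BCF expansions at some $k_0\le n$) more transparent and fits naturally with the surrounding results on purely periodic expansions.
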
 
\begin{proof}
Let us write
$$\alpha=[a_0,a_1,\ldots ],\quad \alpha^c=[b_0,b_1,\ldots]$$
and let $k_0$ be the smallest index $k$ such that $a_k\not=b_k$. By Proposition \ref{prop:dueenne} we have that $k_0\leq n$ and $\alpha_{k_0}\not\equiv\alpha_{k_0}^c\pmod p$. Moreover, it is always true that  $|\alpha_n|_p>1$ for every $n\geq 1$. Therefore, up to relabeling the indices it suffices to show that
\begin{equation*}\label{eq:claim1} 
\hbox{if } \alpha\not\equiv\alpha^c\pmod p, \quad \hbox{ then } |\alpha_2^c|_p<1. 
\end{equation*} 
Since $|\alpha-s(\alpha)|_p<1$, we have $|\alpha^c-s(\alpha)|_p\geq 1$, hence
$$ |\alpha_1^c|_p=\left |\frac 1{\alpha^c-s(\alpha)} \right |_p\leq 1.$$
Now, if $|\alpha_1^c|_p<1$, then $\alpha_1$  (and thus $\alpha_2$) is regular and we are done. If $|\alpha_1^c|_p=1$ , then $|\alpha_1^c-a_1|_p=|a_1|_p>1$, so that 
$$|\alpha_2^c|_p=\frac 1 {|\alpha_1^c-a_1|_p}=\frac 1 {|a_1|_p}<1,$$
proving the claim.
\end{proof}

\begin{proposition}\label{prop:purperconj} Assume that the $BCF$ for $\alpha$ is purely periodic, of the form
$$\alpha=[\overline{a_0,\ldots, a_{N-1}}].$$ Then $\alpha$ is regular,
$$ -\frac 1 {\alpha^c}=[\overline{a_{N-1},\ldots,a_0}] \quad \mbox{and} \quad \alpha^c=[0, \overline{-a_{N-1},\ldots,-a_0}].$$
\end{proposition}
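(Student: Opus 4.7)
My plan is to first establish that $\alpha$ is regular, then read off the expansion of $-1/\alpha^c$ directly from part (b) of Proposition \ref{prop:svilconj}, and finally invert and negate to get the expansion of $\alpha^c$.

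For regularity, I would argue as follows. Since $\alpha$ is a quadratic irrational, $\alpha\neq\alpha^c$, so there is some integer $n_0\ge 0$ with $|\alpha-\alpha^c|_p\ge p^{-2n_0}$. Proposition \ref{prop:esisteneff} then yields that $\alpha_{n_0+2}$ is regular, and part (a) of Proposition \ref{prop:svilconj} propagates regularity forward to every $\alpha_m$ with $m\ge n_0+2$. Pure periodicity forces $\alpha=\alpha_{kN}$ for all $k\ge 0$, so choosing $k$ with $kN\ge n_0+2$ shows $\alpha$ itself is regular.

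Once regularity is in hand, I would apply part (b) of Proposition \ref{prop:svilconj} with $n=N-1$. Using $\alpha_N=\alpha_0=\alpha$, it produces the self-referential identity
$$-\frac{1}{\alpha^c}=-\frac{1}{\alpha_N^c}=\left[a_{N-1},a_{N-2},\ldots,a_0,-\frac{1}{\alpha^c}\right],$$
which forces $-1/\alpha^c=[\overline{a_{N-1},\ldots,a_0}]$. This is a bona fide BCF, since the $a_i$ are partial quotients of $\alpha$ and hence already lie in $\mathcal{Y}$.

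For the last claim, regularity gives $v_p(\alpha^c)>0$, so $s(\alpha^c)=0$ and the first complete quotient of $\alpha^c$ is $1/\alpha^c=-(-1/\alpha^c)$. I would then verify the general fact that flipping the sign of every partial quotient of a purely periodic BCF $[\overline{b_0,\ldots,b_{M-1}}]$ yields the BCF of its negative: the recurrence \eqref{eq:matriciale} shows that under $b_i\mapsto -b_i$ the convergent data transforms as $(A_n,B_n)\mapsto ((-1)^{n+1}A_n,(-1)^n B_n)$, so $A_n/B_n\mapsto -A_n/B_n$ and the limit flips sign; symmetry of $\mathcal{Y}$ about $0$ ensures the $s$-values at each complete quotient also flip, confirming that the formally negated expansion really is the BCF. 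Applying this fact to $-1/\alpha^c$ gives $1/\alpha^c=[\overline{-a_{N-1},\ldots,-a_0}]$, whence $\alpha^c=[0,\overline{-a_{N-1},\ldots,-a_0}]$. The only step requiring any care is this last symmetry/$\mathcal{Y}$-membership check; the rest is a direct plug-in of the earlier propositions.
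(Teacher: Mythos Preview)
Your proof is correct, but it proceeds quite differently from the paper's. The paper does not invoke Proposition~\ref{prop:esisteneff} at all; instead, it writes down the minimal polynomial of $\alpha$ coming from $\alpha=\alpha_{kN}$, namely
\[
X^2-\left(\frac{A_{kN-1}}{B_{kN-1}}-\frac{B_{kN-2}}{B_{kN-1}}\right)X-\frac{A_{kN-2}}{B_{kN-2}}\cdot\frac{B_{kN-2}}{B_{kN-1}},
\]
and lets $k\to\infty$ $p$-adically. Since $A_{kN-1}/B_{kN-1}\to\alpha$, comparison with the trace and norm forces $B_{kN-2}/B_{kN-1}\to-\alpha^c$, hence $-1/\alpha^c=\lim_k B_{kN-1}/B_{kN-2}=[\overline{a_{N-1},\ldots,a_0}]$. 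Regularity is then read off \emph{a posteriori} from $v_p(a_0)<0$ and $v_p(a_{N-1})<0$.

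Your route is more modular: you bootstrap regularity first via Proposition~\ref{prop:esisteneff} and periodicity, and then Proposition~\ref{prop:svilconj}(b) hands you the expansion of $-1/\alpha^c$ immediately through the self-referential identity $\beta_N=\beta_0$. This avoids the limiting argument entirely and makes the logical dependence on the earlier propositions transparent. The paper's approach, by contrast, is the classical Galois-style computation and is independent of Proposition~\ref{prop:esisteneff}; it would work even if that proposition were not available. You also spell out the derivation of the formula for $\alpha^c$ via the symmetry $s(-x)=-s(x)$ of $\mathcal{Y}$, which the paper leaves implicit.
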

\begin{proof}
Assume that $\alpha=[\overline{a_0,\ldots, a_{N-1}}]$; then, for every $k\ge 1$, the minimal polynomial of $\alpha$ is
\begin{align*}  & X^2-\frac{A_{kN-1}-B_{kN-2}}{B_{kN-1}} X-\frac{A_{kN-2}}{B_{kN-1}}=\\ 
& X^2-\left (\frac{A_{kN-1}}{B_{kN-1}}-\frac {B_{kN-2}}{B_{kN-1}}\right ) X-\frac{A_{kN-2}}{B_{kN-2}}\frac{B_{kN-2}}{B_{kN-1}}.
 \end{align*}
 Then we see that $$\lim_{k}\frac{B_{kN-2}}{B_{kN-1}}=-\alpha^c, $$
 where we consider the limit with respect to the $p$-adic norm. Therefore, in $\QQ_p$, 
 \begin{align*} -\frac 1 {\alpha^c} & = \lim_{k}\frac{B_{kN-1}}{B_{kN-2}}\\
 & =[\overline{a_{N-1},\ldots, a_0}]\end{align*}
 Since $a_0=a_N$, with $N\geq 1$, $v_p(\alpha)=v_p(a_0)<0$. Analogously, $v_p\left (-\frac 1 {\alpha^c}\right) =v_p(a_{N-1})<0$, so that $\alpha$ is regular.
\end{proof}

\begin{proposition} \label{prop:regular}
Assume that the $BCF$ for $\alpha$ is periodic. Then
\begin{itemize}
    \item[a)] it is purely periodic if and only if $\alpha$ is regular.
    \item[b)] the length of the pre-period of $\alpha$ coincides with the smallest $n$ such that $\alpha_n$ is regular.
    \item[c)] let $n_0$ be the smallest natural number greater or equal to  $\frac {v_p(\alpha-\alpha^c)} 2$; then the length of the pre-period of $\alpha$ is less or equal to $n_0 +1$.
\end{itemize}
 
 \end{proposition}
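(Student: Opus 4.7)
The plan is to prove (a), (b), (c) in succession; (a) is the substantive step and (b), (c) follow quickly from it together with earlier results of Section \ref{sec:pre}.

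For (a), the ``only if'' direction is Proposition \ref{prop:purperconj}. For ``if'', I argue by contradiction: assume $\alpha$ is regular with periodic BCF of pre-period length $m \ge 1$ and period length $N$, so that $\alpha_m = \alpha_{m+N}$ while $\alpha_{m-1} \ne \alpha_{m-1+N}$ (else the pre-period would be shorter). From $\alpha_{n-1} - a_{n-1} = 1/\alpha_n$ applied at $n=m$ and $n=m+N$ together with $\alpha_m = \alpha_{m+N}$, I get
\[
\alpha_{m-1} - \alpha_{m-1+N} = a_{m-1} - a_{m-1+N},
\]
and conjugating yields $\alpha_{m-1}^c - \alpha_{m-1+N}^c = a_{m-1} - a_{m-1+N}$. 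By Proposition \ref{prop:svilconj}(a) every $\alpha_n$ is regular, so the left side has positive $p$-adic valuation. On the other hand, minimality of $m$ together with the displayed identity forces $a_{m-1} \ne a_{m-1+N}$, so the right side is the difference of two distinct elements of $\mathcal{Y}$; the defining property $\mathcal{Y} \cap (\mathcal{Y} + p\ZZ) = \emptyset$ then forces this difference to have non-positive valuation, a contradiction.

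For (b), let $m$ be the pre-period length of $\alpha$ and $n^*$ the smallest $n$ with $\alpha_n$ regular. Since $\alpha_m$ is purely periodic, (a) gives $\alpha_m$ regular, so $n^* \le m$. Conversely $\alpha_{n^*}$ is regular and is a complete quotient of the eventually periodic $\alpha$, hence itself periodic; by (a) it is then purely periodic, so $m \le n^*$.

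For (c), I apply Proposition \ref{prop:esisteneff} with $n = n_0$: by the minimality of $n_0$ one has $v_p(\alpha - \alpha^c) \le 2n_0$, equivalently $|\alpha - \alpha^c|_p \ge p^{-2n_0}$, so $\alpha_{n_0+2}$ is regular and (b) gives a pre-period of length at most $n_0+2$. To reach the claimed $n_0+1$, I would reopen the proof of Proposition \ref{prop:esisteneff}: by Proposition \ref{prop:dueenne}, the first index $k_0$ at which the expansions of $\alpha$ and $\alpha^c$ disagree satisfies $k_0 \le n_0$, and the two-subcase analysis there already yields $\alpha_{k_0+1}$ regular in the favourable subcase $|\alpha_{k_0+1}^c|_p < 1$. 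The main obstacle is ruling out that the unfavourable subcase $|\alpha_{k_0+1}^c|_p = 1$ coincides with $k_0 = n_0$; I expect this to follow by tracking $v_p(\alpha - \alpha^c)$ through the Möbius action of the common first $k_0$ convergents (using the determinant identity \eqref{eq:ABrel}) to relate it to $v_p(\alpha_{k_0} - \alpha_{k_0}^c)$ and the intermediate valuations $k_1, \ldots, k_{k_0}$.
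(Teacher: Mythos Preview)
Your arguments for (a) and (b) are correct. For (a) the paper takes a different route: writing the expansion as $[a_0,\ldots,a_{t-1},\overline{a_t,\ldots,a_{t+N-1}}]$ with $a_{t-1}\ne a_{t+N-1}$, it computes $-1/\alpha_t^c$ in two ways---once via Proposition~\ref{prop:purperconj} applied to the purely periodic $\alpha_t$, once via Proposition~\ref{prop:svilconj}(b)---and reads off the contradiction $a_{t-1}=a_{t+N-1}$ from the leading partial quotient. Your argument is more direct: it uses only Proposition~\ref{prop:svilconj}(a) together with the fact that distinct elements of $\mathcal{Y}$ are never congruent modulo~$p$, and avoids invoking the expansion of $-1/\alpha_t^c$ altogether. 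Both are valid; the paper's approach yields a little more structural information about the conjugate, yours is leaner.

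For (c), your first step is exactly the paper's entire proof: apply Proposition~\ref{prop:esisteneff} with $n=n_0$ and then invoke~(b). As you noticed, this gives only pre-period $\le n_0+2$; the paper does not argue further. In fact the stated bound $n_0+1$ is a slip. Take for instance $\alpha=\tfrac12\sqrt{p^t+1}$ with $p\ge 5$ and $t\ge 3$ as in Section~\ref{sec:further}: then $tr(\alpha)=0$ and $v_p(\alpha)=0$, so $v_p(\alpha-\alpha^c)=v_p(2\alpha)=0$ and hence $n_0=0$, yet the explicit expansion given there has pre-period exactly~$2$ (in agreement with the trichotomy at the start of Section~\ref{sec:sqrt}). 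So you should stop after deriving $n_0+2$; the sharpening you are attempting is chasing a bound that does not hold.
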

 \begin{proof} $a$)
  If the $BCF$ for $\alpha$ is purely periodic then, by Proposition \ref{prop:purperconj}, $\alpha$  must be regular. Conversely, assume that $\alpha$ is regular and the $BCF$ for $\alpha$ is
  $$[a_0,\ldots, a_{t-1},\overline{a_t,\ldots,a_{t+N-1}}] \hbox{ with } a_{t-1}\not=a_{t+N-1}.$$
  Then $\alpha_t=[\overline{a_t,\ldots,a_{t+N-1}}]$ 
   is purely periodic and Proposition \ref{prop:purperconj} implies that
   $$-\frac 1 {\alpha_t^c}=[\overline{a_{t+N-1},\ldots,a_{t}}].$$
On the other hand, by Proposition \ref{prop:svilconj} we have that 
   $$-\frac 1 {\alpha_t^c}=\left [a_{t-1},\ldots,a_{0},-\frac 1 {\alpha^c} \right ].$$
   Comparing the two expressions gives that $a_{t-1}=a_{t+N-1}$, contrarily to our assumptions. It follows that the $BCF$ for $\alpha$ has pre-period empty, so it is purely periodic.
   \begin{itemize}
   \item[$b$)] Follows immediately from $a$).
   \item[$c$)] By Proposition \ref{prop:esisteneff}, $\alpha_{{n_0}+2}$ is regular. Then the assertion follows from $b$).
   \end{itemize}
   \end{proof}

\begin{remark}
We point out that the results contained in Propositions \ref{prop:purperconj} and \ref{prop:regular} point a) were also proved by Bedocchi \cite{Bedocchi1988} in a different and more elaborated way.
\end{remark}
\medskip
\noindent We end the section by noticing that Proposition \ref{prop:purperconj} implies the following easy corollary:
\begin{corollary}
Assume that $\alpha$ has purely periodic $BCF$ expansion; then it is palindrome and only if $N(\alpha)=-1$.\\
In particular, if $\alpha$ has a palindrome periodic expansion, then $\alpha$ is a real quadratic irrational.
\end{corollary}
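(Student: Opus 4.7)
The plan is to use Proposition \ref{prop:purperconj} as the main tool. Writing $\alpha = [\overline{a_0, \ldots, a_{N-1}}]$, that proposition gives
$$-\frac{1}{\alpha^c} = [\overline{a_{N-1}, \ldots, a_0}].$$
I would then argue that the palindrome condition $a_i = a_{N-1-i}$ for every $i \in \{0, \ldots, N-1\}$ is equivalent to the equality $-1/\alpha^c = \alpha$, i.e. to $\alpha \alpha^c = N(\alpha) = -1$.

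For the forward implication, if the period is palindrome then the reversed periodic sequence $(a_{N-1}, \ldots, a_0)$ coincides termwise with $(a_0, \ldots, a_{N-1})$, so $-1/\alpha^c$ and $\alpha$ admit identical $BCF$ expansions and are therefore equal in $\QQ_p$. Conversely, if $N(\alpha) = -1$ then $\alpha = -1/\alpha^c$ is itself purely periodic, and comparing its two $BCF$ expansions forces the sequences $(a_0, a_1, \ldots)$ and $(a_{N-1}, a_{N-2}, \ldots)$ to agree position by position, yielding the palindrome property. The one point that requires care is this comparison step: I would justify it by noting that the algorithm \eqref{eq:algo} determines every partial quotient $a_n$ uniquely via $s(\alpha_n) \in \mathcal{Y}$, so two purely periodic $BCF$ expansions representing the same $p$-adic number must have termwise identical sequences of partial quotients, not merely agree up to cyclic shift.

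For the \emph{in particular} statement, I would assume that the $BCF$ of $\alpha$ is periodic with palindrome period and pick $t \geq 0$ minimal such that $\alpha_t$ is purely periodic (with the same period). The equivalence just proved, applied to $\alpha_t$, gives $N(\alpha_t) = -1$, so the minimal polynomial of $\alpha_t$ over $\QQ$ is $X^2 - \mathrm{tr}(\alpha_t)\, X - 1$. Its discriminant $\mathrm{tr}(\alpha_t)^2 + 4$ is a strictly positive rational, so both roots lie in $\RR$. Since $\QQ(\alpha) = \QQ(\alpha_t)$, the field $\QQ(\alpha)$ admits a real embedding and $\alpha$ is a real quadratic irrational, as claimed. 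The only genuine obstacle in the whole argument is the uniqueness remark in paragraph two; once that is pinned down, the rest is immediate from Proposition \ref{prop:purperconj} and a discriminant computation.
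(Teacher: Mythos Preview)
Your argument is correct and is precisely the natural proof the paper has in mind: the corollary is stated without proof as an ``easy'' consequence of Proposition~\ref{prop:purperconj}, and your use of that proposition together with the uniqueness of the BCF expansion (to compare $[\overline{a_0,\ldots,a_{N-1}}]$ and $[\overline{a_{N-1},\ldots,a_0}]$ termwise) is exactly the intended route. The discriminant computation for the ``in particular'' clause is likewise the standard one, and your care in passing to a purely periodic tail $\alpha_t$ when the expansion is only eventually periodic is appropriate.
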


Although we are not able to prove non-periodicity, it seems that the condition $N(\alpha)=-1$ does not guarantee in general that the expansion is periodic. For example, if we take $p=5$ and $\alpha=\frac{8+\delta}{5}$ where $\delta\in \QQ_5$ is the square root of $89$ which is congruent to $3$ modulo $5$, then the $BCF$ of $\alpha$ is
\[
[-9/5, -2/5, -59/25, 2/5, -9/5, 23/25, 3/5, 1/5, 51/25, 8/5, 2/5, -7/5, -12/5, 6/5, ...],
\]
which doesn't show a periodic pattern.

\section{Some general criteria for periodicity} \label{sec:criteria}

In this section we are going to prove some general criterion to detect in principle periodicity of Browkin continued fractions.
\medskip

If $\alpha \in \QQ_p$ is quadratic irrational over $\QQ$, then we can always write it as 
\[\alpha = \cfrac{b_0 + \delta}{p^{k_0}c_0},\]
with $b_0, c_0, k_0 \in \mathbb Z$, $p \nmid c_0$, $\delta \in \mathbb Q_p$, $\delta^2 = \Delta$ non square integer. Eventually replacing $b_0, c_0, \Delta $ by $b_0c_0, c_0^2$ and $c_0 \Delta$ we can always assume that the coefficients satisfy the extra condition $c_0 \mid \Delta - b_0^2$. In this way, it is easy to prove that the sequence of complete quotients $\{\alpha_n\}_n\ge 0$ of the $BCF$ expansion of $\alpha$ is given by
\[
\alpha_n=\frac{b_n+\delta}{p^{k_n}c_n},
\]
with $b_n,c_n \in \ZZ$, $p \nmid c_n$, $\delta^2=\Delta$ and $a_n,b_n,c_n$ and $k_n$ satisfying the recurrence formulas
\begin{equation}\label{eq:recurrenciesbc}
\begin{cases} b_n+b_{n+1} & =a_n p^{k_n}c_n\\
p^{k_n+k_{n+1}}c_n c_{n+1}&=\Delta-b_{n+1}^2
\end{cases}
\end{equation}
These formulas hold also for Ruban continued fractions, where the expansion is computed using the same algorithm but another $s$-function, namely taking the set $\ZZ\left [ \frac{1}{p} \right] \cap [0, p)$ in place of $\mathcal Y$. For more about this, see \cite{CVZ2019}. We will usually refer to Ruban continued fraction expansions using the abbreviation $RCF$. 

In the following proposition, we show a general criterion which gives necessary and sufficient condition for periodicity.

\begin{proposition} \label{prop:periodicity}
Let $\alpha \in \QQ_p$ be quadratic over $\QQ$. Then, the $BCF$ expansion is periodic if and only if there exists a sequence $n_t \rightarrow \infty$ such that  $\{|b_{n_t}|\}$ is constant.
\end{proposition}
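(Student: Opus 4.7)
The plan is to use a "finitely many complete quotients" pigeonhole argument based on the parametrization $\alpha_n = (b_n + \delta)/(p^{k_n} c_n)$. I would first record the preliminary observation that, since $\delta \notin \QQ$, this parametrization (with $b_n, c_n \in \ZZ$, $p \nmid c_n$) is uniquely determined by $\alpha_n$: equating two such representations and solving for $\delta$ would force $\delta \in \QQ$ unless the triples coincide.

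The "only if" direction is then immediate: if the $BCF$ expansion is periodic, the sequence of complete quotients $\{\alpha_n\}$ is eventually periodic, so by uniqueness the triples $(b_n, c_n, k_n)$ are eventually periodic, and in particular $|b_n|$ is constant along an infinite subsequence.

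For the converse, assume $|b_{n_t}| = B$ is constant along the subsequence and apply the second relation of \eqref{eq:recurrenciesbc} with $n = n_t - 1$, obtaining
\[
p^{k_{n_t - 1} + k_{n_t}}\, c_{n_t - 1}\, c_{n_t} = \Delta - b_{n_t}^2.
\]
Because $\Delta$ is not a square in $\QQ$, the right-hand side is a nonzero integer of absolute value at most $|\Delta|_\infty + B^2$. Since $k_m \ge 1$ for every $m \ge 1$ and $c_{n_t - 1}, c_{n_t}$ are nonzero integers, I deduce simultaneously that $|c_{n_t}|_\infty$ and $p^{k_{n_t - 1} + k_{n_t}}$ are bounded by fixed constants, so $c_{n_t}$ and $k_{n_t}$ each range over finite sets. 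Together with the two possible values $b_{n_t} = \pm B$, the triples $(b_{n_t}, c_{n_t}, k_{n_t})$ lie in a finite set; by pigeonhole some $\alpha_{n_t} = \alpha_{n_s}$ with $t \ne s$, whence the tail of the $BCF$ is periodic.

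There is no serious obstacle: the entire converse is extracted from the observation that the second line of \eqref{eq:recurrenciesbc} controls $c_n$ and $k_n$ simultaneously once $b_{n+1}$ is controlled. The only point that deserves care is the uniqueness of the triple $(b_n, c_n, k_n)$, which underpins both directions.
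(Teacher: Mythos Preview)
Your proof is correct and follows essentially the same approach as the paper: both directions are handled identically, using eventual periodicity of the complete quotients for the forward implication, and for the converse applying the second recurrence of \eqref{eq:recurrenciesbc} at index $n_t-1$ to bound $c_{n_t}$ and $k_{n_t}$ simultaneously, then invoking pigeonhole. Your write-up is in fact slightly more careful than the paper's (you use the correct consecutive indices $n_t-1,\,n_t$ in the recurrence and explicitly note that $\Delta-B^2\neq 0$), but the underlying argument is the same.
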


\begin{proof}
Assume first that $\alpha$ has a periodic $BCF$ expansion. Then, there exist $M_0>0$ and $h>0$ such that, for every $m\ge M_0$, $\alpha_{m+h}=\alpha_m$, hence for every $t\in \NN$, we have $\alpha_m=\alpha_{m+th}$. This means that if we take $n_t:=m+th$, then $b_{m+th}=b_{m}$ as wanted. \\

Conversely, assume that there exists a sequence $\{|b_{k_n}|\}$ which is constant. We call this constant $b$. From \eqref{eq:recurrenciesbc}, we have that 
\[
p^{k_{t_{n-1}}+k_{t_n}}c_{k_{t_{n-1}}}c_{k_{t_n}}=\Delta-b^2.
\]
Notice that the $c_{t_n}$'s are non-zero integers and the $k_{t_n}$'s are all positive; therefore, we have $|p^{k_{t_n}}c_{t_n}|\le |\Delta-b^2|$ for every $n\ge 0$. This implies that there exists a finite number of possibilities for the $k_{t_n}$'s and $c_{t_n}$'s, so that $\alpha_{t_n}=\frac{b+\delta}{p^{k_{t_n}}c_{t_n}}$ varies among a finite range of possibilities, hence there exists $N,M\ge 0$ such that $\alpha_{t_N}=\alpha_{t_M}$, giving periodicity as wanted.
\end{proof}

This implies the following corollary:
\begin{corollary} \label{cor:norm}
For every $n\ge 0$, we denote by $\xi_n$ the image of the complete quotient $\alpha_n$ in $\CC$. If there exists $t_n \rightarrow \infty$ such that $N(\xi_{t_n})<0$, then $\alpha$ has periodic $BCF$ expansion.
\end{corollary}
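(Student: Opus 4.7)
The plan is to deduce the corollary directly from Proposition \ref{prop:periodicity} by showing that the hypothesis $N(\xi_{t_n})<0$ forces the sequence $\{b_{t_n}\}$ to be bounded, and hence to contain a subsequence of constant absolute value.

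First I would unwind the notation. Writing $\alpha_n=\frac{b_n+\delta}{p^{k_n}c_n}$ as in the paragraph preceding Proposition \ref{prop:periodicity}, and observing that the Galois conjugate of $\alpha_n$ is obtained by sending $\delta\mapsto -\delta$, a short computation gives
\[
N(\alpha_n)=\alpha_n\,\alpha_n^c=\frac{b_n^2-\Delta}{p^{2k_n}c_n^2}.
\]
This identity holds formally in $\QQ(\alpha)$, and hence also after applying any embedding of $\QQ(\alpha)$ into $\CC$, so $N(\xi_n)=\frac{b_n^2-\Delta}{p^{2k_n}c_n^2}$ as an element of $\CC$.

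Next, I would exploit the sign hypothesis. The denominator $p^{2k_n}c_n^2$ is a positive real number; therefore $N(\xi_{t_n})<0$ is a real inequality that forces $\Delta>0$ (so that $\QQ(\alpha)$ embeds into $\RR$ and the norm is real) and $b_{t_n}^2<\Delta$. In particular
\[
|b_{t_n}|<\sqrt{\Delta}\qquad\text{for all }n.
\]
Since the $b_{t_n}$ are integers, they take only finitely many values along the sequence $\{t_n\}$. By the pigeonhole principle, some integer value is attained infinitely often, so we can extract a subsequence $\{t_{n_k}\}$ along which $|b_{t_{n_k}}|$ is constant.

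Finally, Proposition \ref{prop:periodicity} applied to this subsequence yields immediately that the $BCF$ expansion of $\alpha$ is periodic. There is no real obstacle here; the only mildly subtle point is to notice that the inequality $N(\xi_{t_n})<0$ is meaningful only once we know $\Delta>0$, which is automatically forced by the hypothesis itself (a negative real value of the norm cannot occur for an imaginary quadratic field, where $N$ is a sum of squares).
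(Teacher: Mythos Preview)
Your proof is correct and follows essentially the same approach as the paper: compute $N(\xi_n)=\frac{b_n^2-\Delta}{p^{2k_n}c_n^2}$, observe that a negative norm forces $b_{t_n}^2<\Delta$, extract a subsequence with constant $|b_{t_n}|$, and invoke Proposition~\ref{prop:periodicity}. The only difference is that you state the sharper bound $|b_{t_n}|<\sqrt{\Delta}$ and make explicit the automatic positivity of $\Delta$, both of which are welcome clarifications.
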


\begin{proof}
This is an easy consequence of the previous proposition. Indeed, if there exists a sequence of $t_n \rightarrow \infty$ such that $N(\xi_{t_n})<0$, then
$\frac{b_{t_n}^2-\Delta}{p^{2k_{t_n}}{c_{t_n}}^2}<0$, which implies that $|b_{t_n}|\le \Delta$ for every $n\ge 0$. As the $b_{t_n}$'s are integers, this implies that on the sequence $t_n$ they take a finite number of possibilities; hence, there exists a subsequence in which the $b_j$ are constant. Applying Proposition \ref{prop:periodicity}, this implies that $\alpha$ has periodic $BCF$ expansion as wanted.
\end{proof}

\begin{remark}
We point out that the hypothesis of the corollary can happen only in the case in which $\alpha$ can be embedded in the reals, i.e. if $\Delta >0$. In Ruban's case, this is also a necessary condition,
while this is not the case for BCF expansion as shown in the following example.
\end{remark}

\begin{example} Choose any $a_0,a_1\in\mathcal{Y}$ such that $v_p(a_0),v_p(a_1)<0$ and  $-4<a_0a_1<0$. Let $\alpha=[\overline{a_0, a_1}]$. Then, $\alpha$ is a root of the polynomial $a_1X^2-a_0a_1X-a_0$, whose discriminant is $a_0a_1(a_0a_1+4)<0$.
\end{example}

\begin{remark}
In \cite[Proposition 6.4]{CVZ2019}, the authors prove that $\alpha$ has periodic $RCF$ expansion if and only if the sequence ${|b_n|}$ is bounded from above. Their proof uses only the recurrence formulas \eqref{eq:recurrenciesbc}, so it applies also to our case. If we compare it with our Proposition \ref{prop:periodicity}, this implies the following easy consequence:
\end{remark}

\begin{corollary}
Let $\alpha \in \QQ_p$ be quadratic over $\QQ$. Then, there exists a sequence $n_t \rightarrow \infty$ such that  $\{|b_{n_t}|\}$ is constant if and only if the sequence of $\{|b_n|\}$ is bounded from above.
\end{corollary}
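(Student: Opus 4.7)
The plan is to deduce both implications by using Proposition \ref{prop:periodicity} as a bridge together with the observation that $\{|b_n|\}$ is a sequence of non-negative integers.

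For the implication ``bounded $\Rightarrow$ constant subsequence,'' I would argue directly by a pigeonhole principle. Since the $b_n$ are integers and $\{|b_n|\}$ is bounded from above by hypothesis, the set $\{|b_n| : n \ge 0\}$ is a finite subset of $\NN$. Thus at least one value must be attained infinitely many times, which immediately produces a strictly increasing sequence of indices $n_t$ along which $|b_{n_t}|$ is constant. No appeal to periodicity is even needed here.

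For the converse ``constant subsequence $\Rightarrow$ bounded,'' the key idea is to route the argument through periodicity. Given a subsequence $n_t \to \infty$ with $|b_{n_t}|$ constant, Proposition \ref{prop:periodicity} yields that the $BCF$ expansion of $\alpha$ is periodic. Write this periodic expansion in the form $[a_0, \dots, a_{M-1}, \overline{a_M, \dots, a_{M+h-1}}]$. Then the sequence of complete quotients $\{\alpha_n\}$ is eventually periodic of period $h$, so it takes only finitely many values in $\QQ_p$. Since $b_n$ is determined by $\alpha_n$ via $\alpha_n = (b_n + \delta)/(p^{k_n}c_n)$, the sequence $\{b_n\}$ itself takes only finitely many values and in particular $\{|b_n|\}$ is bounded.

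There is no real obstacle here, since both directions are essentially formal once one has Proposition \ref{prop:periodicity} in hand; the content is really already packaged in that proposition together with the RCF-style observation recalled in the preceding remark. The only thing to be slightly careful about is that boundedness of $\{|b_n|\}$ would \emph{a priori} require the full sequence of complete quotients to lie in a finite set, which is why I route the reverse direction through eventual periodicity rather than trying to bound $|b_n|$ term-by-term from the recurrence \eqref{eq:recurrenciesbc} directly.
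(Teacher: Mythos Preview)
Your proof is correct. The paper does not give a standalone argument for this corollary; it simply observes in the preceding remark that Proposition~\ref{prop:periodicity} (constant subsequence $\Leftrightarrow$ periodic) and \cite[Proposition~6.4]{CVZ2019} (bounded $\Leftrightarrow$ periodic, whose proof carries over to the $BCF$ setting) together yield the equivalence. Your route is slightly different and in fact more self-contained: for ``bounded $\Rightarrow$ constant subsequence'' you bypass periodicity entirely via pigeonhole on the integer values $|b_n|$, and for the converse you use Proposition~\ref{prop:periodicity} to obtain periodicity and then read off boundedness directly from the eventual periodicity of the complete quotients, without invoking the CVZ criterion. The only point worth making explicit is that $\alpha_n$ determines $b_n$ uniquely (since $1,\delta$ are $\QQ$-linearly independent and $p\nmid c_n$), so eventual periodicity of $\{\alpha_n\}$ indeed forces $\{b_n\}$ to take finitely many values; this is implicit in your argument and is straightforward.
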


In particular, Corollary \ref{cor:norm} implies that we have to study only the case in which the norms of the complete quotients become positive from a certain point on.\\

We also point out that, if the partial quotients $a_n$ of the $BCF$ satisfies $0< a_n < \frac{p}{2}$, then it coincides with the $BCF$ expansion, so for these classes of $\alpha$ we can apply \cite[Theorem 1.3]{CVZ2019} which gives a necessary and sufficient condition to have periodic continued fraction expansion.
This class of $\alpha$ is not empty, as showed in the following example:

\begin{example}
Take $p=3$ and consider $\delta$ the only square root of $37$ in $\QQ_3$ which is congruent to $1$ mod $3$. Take then $\alpha=\frac{1+\delta}{6}$; then the $BCF$ expansion of $\alpha$ is $\left [ \overline{\frac{1}{3} }\right]$, which coincides with the Ruban one.
\end{example}

In the case in which the norms of the (real) embeddings of the complete quotients $\alpha_n$ of the $BCF$ expansion of $\alpha$ are negative for ``enough'' consecutive quotients, then we can conclude that we have periodicity, and we have an effective bound for the period of the $BCF$ expansion, similar to the one obtained in \cite{CVZ2019}. More specifically, we have the following result:  
\begin{proposition}
Let $\alpha \in \QQ_p$ be a quadratic irrational and for every $n>0$ denote by $\xi_n$ and $\xi_n'$ the two images $\alpha_n$ in $\CC$ and by $t=\lfloor \sqrt \Delta \rfloor$. Assume that $\exists\, n_0>0$ such that $ N(\xi_n)<0$ for every $n\in [n_0, n_0+K]$ with $K:=(2t+1)\Delta+1-\frac{t(t+1)(2t+1)}{3}$; then, the $BCF$ expansion of $\alpha$ is periodic of period of length at most $K$. 
\end{proposition}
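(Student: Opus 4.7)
The strategy is a pigeonhole argument on the possible complete quotients $\alpha_n$ with $n\in[n_0,n_0+K]$. Write each complete quotient as
\[\alpha_n=\frac{b_n+\delta}{D_n},\qquad D_n=p^{k_n}c_n,\ b_n,c_n\in\ZZ,\ p\nmid c_n,\]
and observe that the hypothesis $N(\xi_n)<0$ amounts, via $N(\xi_n)=(b_n^2-\Delta)/D_n^2$, to $b_n^2<\Delta$, i.e.\ $|b_n|_\infty\le t=\lfloor\sqrt{\Delta}\rfloor$ for every $n\in[n_0,n_0+K]$.

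The second recurrence in \eqref{eq:recurrenciesbc} reads $p^{k_n+k_{n+1}}c_nc_{n+1}=\Delta-b_{n+1}^2$, which is strictly positive throughout the range; hence consecutive $c_n$'s have the same sign there. After replacing $\delta$ by $-\delta$ if necessary (which simultaneously changes every $b_n$ into $-b_n$, preserving $|b_n|_\infty\le t$) we may assume that all $c_n$ in the range are positive. Since $k_n\ge 1$ for every $n\ge 1$, the quantity $D_n=p^{k_n}c_n$ is then a positive integer divisible by $p$, and the same recurrence, applied for the index $n$, becomes $D_{n-1}D_n=\Delta-b_n^2$; in particular $D_n$ is a positive integer divisor of $\Delta-b_n^2$. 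Because $\delta\notin\QQ$, the pair $(b_n,D_n)$ uniquely determines $\alpha_n$, so the number of possible $\alpha_n$'s in the range is at most
\[\sum_{b=-t}^{t}\#\{\text{positive divisors of }\Delta-b^2\}\le\sum_{b=-t}^{t}(\Delta-b^2)=(2t+1)\Delta-\frac{t(t+1)(2t+1)}{3}=K-1.\]

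Since $[n_0,n_0+K]$ contains $K+1$ indices and there are at most $K-1$ possible values, the pigeonhole principle yields $n_0\le m<m'\le n_0+K$ with $\alpha_m=\alpha_{m'}$ and $m'-m\le K$. As $\alpha_{n+1}=1/(\alpha_n-s(\alpha_n))$ is determined by $\alpha_n$, this equality propagates forward and forces the $BCF$ of $\alpha$ to be periodic from the index $m$ onwards with period dividing $m'-m$, hence at most $K$. The main obstacle is the sign-normalization step together with the clean reduction to an integer divisor count: one has to verify rigorously that the sign of $c_n$ is well-defined and constant along the range (which rests on the positivity of $\Delta-b_{n+1}^2$ throughout), and that the constraints $v_p(c_n)=0$ and $k_n\ge 1$ really place $D_n$ among the \emph{integer} positive divisors of $\Delta-b_n^2$, so that the elementary bound $\#\{\text{divisors of }N\}\le N$ can be applied.
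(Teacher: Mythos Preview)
Your argument is correct and follows essentially the same route as the paper: bound $|b_n|$ by $t$ via $N(\xi_n)<0$, use the recurrence to get a constant sign for the $c_n$'s and hence an integer-divisor bound $D_n\mid(\Delta-b_n^2)$, then count at most $\sum_{b=-t}^t(\Delta-b^2)=K-1$ possible complete quotients and apply pigeonhole. Your explicit sign-normalization via $\delta\mapsto-\delta$ is a minor cosmetic addition (the paper simply records that the $c_i$'s share a sign), but the substance is identical.
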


\begin{proof}
The proof is similar to Step b) in the proof of \cite[Theorem 6.5]{CVZ2019}. Assume that $N(\xi_n)<0$ from a certain $n_0>0$ on; then $\Delta-b_n^2>0$. From the recurrence formulas \eqref{eq:recurrenciesbc} we have that the $c_i$'s have all the same sign; therefore, for every fixed value $b_n$, the second equation of \eqref{eq:recurrenciesbc} implies that the quantity $p^{k_n}c_n$ can assume at most $\Delta-b_n^2$ different values. On the other hand, $b_n$ can assume at most $2t+1$ different values between $-t$ and $t$. Using this we have that, if $N(\xi_n)<0$ for at most $1+\sum_{i=-t}^t (\Delta-i^2)=: K$ steps, we have a repetition in the sequence of the complete quotients, which implies periodicity of the $BCF$ of $\alpha$ as wanted. Finally, $K$ gives an effective estimate for the length of the period of the expansion of $\alpha$, which completes the proof.
\end{proof}

\begin{remark}
We point out that there exist examples of $\alpha$ with periodic $BCF$ expansions and with $N(\xi_n)$ with oscillating signs and also examples with $N(\xi_n)>0$ for every $n>n_0$ for a certain $n_0$, as shown in the following example.
\end{remark}

\begin{example} Let $p=5$ and $\alpha=\frac {-13+\sqrt{19}}{30}$. Then the $BCF$ for $\alpha$ is purely periodic with period 12:
$$\alpha=[\overline{4/5, -11/5, -3/5, -4/25, 274/125, -4/25, -3/5, -11/5, 4/5, 1/5, 24/25, 1/5} ]. $$
It can be verified that all complete quotients have norm $>0$.
\end{example} 

Similarly, in the case in which the norms of the complete quotients have 'oscillating signs' for enough steps, we have a similar criterion to detect periodicity and an effective bound for the length of the period of the $BCF$ expansion. More precisely, we have the following result:

\begin{proposition}
Let $\alpha \in \QQ_p$ be a quadratic irrational and for every $n>0$ denote as before by $\xi_n$ and $\xi_n'$ the two images $\alpha_n$ in $\CC$ and by $t=\lfloor \sqrt \Delta \rfloor$. Assume that $\exists\, n_0>0$ such that $ N(\xi_n)$ and $N(\xi_{n+1})$ have alternating signs for every $n\in [n_0, n_0+2K]$ with $K:=(2t+1)\Delta+1-\frac{t(t+1)(2t+1)}{3}$; then, the $BCF$ expansion of $\alpha$ is periodic of period of length at most $2K$. 
\end{proposition}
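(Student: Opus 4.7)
The plan is to adapt the counting-and-pigeonhole argument of the preceding proposition to the alternating-sign regime by passing to a suitable sparse subsequence. Under the hypothesis, the signs of $N(\xi_n)$ strictly alternate for $n \in \{n_0, \ldots, n_0 + 2K + 1\}$, so exactly $K+1$ of these $2K+2$ consecutive indices form an arithmetic progression $m_0 < m_1 < \cdots < m_K$ of common difference $2$ at which $N(\xi_{m_j}) < 0$.

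Next I would run, on this subsequence, the bookkeeping of the preceding proof. At each $m_j$, the inequality $N(\xi_{m_j}) = \frac{b_{m_j}^2 - \Delta}{p^{2k_{m_j}} c_{m_j}^2} < 0$ gives $b_{m_j}^2 < \Delta$, so $|b_{m_j}| \leq t$; for each fixed value of $b_{m_j}$, the recurrence \eqref{eq:recurrenciesbc} forces $p^{k_{m_j}} c_{m_j}$ to be an integer divisor of $\Delta - b_{m_j}^2$, and summing over $b \in \{-t, \ldots, t\}$ produces the same upper bound $\sum_{i=-t}^{t}(\Delta - i^2) = K - 1$ on the number of admissible values of the complete quotient $\alpha_{m_j}$.

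The key step is then a pigeonhole: with $K+1$ indices $m_j$ sharing at most $K - 1$ possible values of $\alpha_{m_j}$, there exist $0 \leq j_1 < j_2 \leq K$ with $\alpha_{m_{j_1}} = \alpha_{m_{j_2}}$; since the $m_j$ are equally spaced by $2$, this yields periodicity of the $BCF$ expansion of $\alpha$ with period $m_{j_2} - m_{j_1} = 2(j_2 - j_1) \leq 2K$, as required.

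The main point that I would need to treat carefully is that, unlike in the preceding proposition where a contiguous block of negative norms forces the $c_n$ in the block to share a common sign, here the $c_{m_j}$ along the sparse subsequence may alternate in sign, as a direct analysis of $c_n c_{n+1}$ through \eqref{eq:recurrenciesbc} reveals. This can be accommodated by splitting $(m_j)$ according to the sign of $c_{m_j}$ and applying the pigeonhole on the appropriate half, without altering the final bound $2K$ on the length of the period.
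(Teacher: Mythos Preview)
Your approach is essentially the paper's. The paper likewise passes to the negative-norm indices (a step-$2$ progression), observes via \eqref{eq:recurrenciesbc} that the $c_n$ along this progression alternate in sign, and handles this by restricting further to a step-$4$ progression where the sign is constant---your ``splitting $(m_j)$ according to the sign of $c_{m_j}$'' is exactly this, since the alternation is strict---and then runs the same pigeonhole count there.
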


\begin{proof}
Assume without loss of generality that there exists a $n_0>0$ such that $N(\xi_n)<0$ for the odd $n\ge n_0$ and that $N(\xi_n)>0$ for the even $n\ge n_0$. This implies that for $n=2m+1$ we have $|b_n|< \sqrt{\Delta}$. Using again the recurrence formulas \eqref{eq:recurrenciesbc}, we have that the $a_n$ and the two subsequences $c_{2n}$ and $c_{2n+1}$ will have alternating signs; in particular, if we restrict to the sequence $c_{4m+1}$, then it will have constant sign. We can argue similarly to the previous proposition on the subsequence $4m+1$; indeed, for every fixed value $b_{4m+1}$, the second equation of \eqref{eq:recurrenciesbc} implies that the quantity $p^{k_{4m+1}}c_{4m+1}$ can assume at most $\Delta-b_{4m+1}^2$ different values. On the other hand, $b_{2m+1}$ can assume at most $2t+1$ different values between $-t$ and $t$. Using this we have that, if $N(\xi_{4m+1})<0$ and $N(\xi_{4m+2})>0$ for at most $1+\sum_{i=-t}^t (\Delta-i^2)=: K$ consecutive $m$'s, we have a repetition in the sequence of the complete quotients, which implies periodicity of the $BCF$ of $\alpha$ as wanted. Finally, $2K$ gives an effective estimate for the length of the period of the expansion of $\alpha$, which completes the proof.
\end{proof}



\section{Periodicity of square roots} \label{sec:sqrt}

In this section we will assume that $\alpha$ is a quadratic irrational such that $tr(\alpha)=0$. Then  $v_p(\alpha)=v_p(\alpha^c)$, so that $\alpha$ is not regular, but we have the following cases:
\begin{itemize}
\item if $v_p(\alpha)=0$, then $v_p(\alpha_1^c)=0$. In this case $\alpha_1$ is not regular and $\alpha_2$ is regular;
\item if $v_p(\alpha)<0$, then $v_p(\alpha^c-a_0)=v_p(-\alpha-a_0)=v_p(2a_0)<0$; in this case $v_p(\alpha_1^c)>0$ and $\alpha_1$ is regular;
\item if $v_p(\alpha)>0$, then $\alpha_1=\frac 1 \alpha$ has trace 0 and $v(\alpha_1)<0$, hence $\alpha_1$ is not regular and $\alpha_2$ is regular.
\end{itemize}
Thus, if the $BCF$ for $\alpha$ is periodic, then the preperiod has length $1$ when $v_p(\alpha)<0$ and $2$ when $v_p(\alpha)\geq 0$.
Assume $v_p(\alpha)<0$ (so that $\alpha_1$ is regular) and let 
$$\alpha=[a_0,a_1,\ldots].$$
Then
$$0=\alpha+\alpha^c=2a_0+\frac 1 \alpha_1+\frac 1{\alpha_1^c},$$
so that
$$-\frac 1 {\alpha_1^c}=2a_0+\frac 1\alpha_1.$$
Assume furtherly that $|a_0|<\frac p 4$, so that $2a_0\in\mathcal{Y}$; then
$$-\frac 1 {\alpha_1^c} =[2a_0, a_1, a_2,\ldots ].$$
If the $BCF$ for $\alpha$ is periodic with period $d$, by Proposition \ref{prop:purperconj} we have that
$$a_{d}=2a_0,\quad a_{d-1}=a_1,\ldots, a_{d-j}=a_j,\hbox { for } j=1,\ldots, \left\lfloor\frac d 2\right\rfloor.$$
It follows that the $BCF$ expansion for $\alpha$ is
\begin{equation}\label{eq:tr0palindromo} 
[a_0,\overline{a_1,a_2,\ldots, a_2, a_1,2a_0}],
\end{equation} 
where
$a_1,a_2,\ldots, a_2, a_1$ denotes a palindromic sequence (of any length, odd or even).
Then, we can write
$$\alpha=[a_0,a_1,a_2,\ldots , a_2,a_1, a_0+\alpha];$$
by \eqref{eq:alpha} we have that
\[
\alpha=\frac {(a_0+\alpha) A_{d-1}+A_{d-2}}{(a_0+\alpha) B_{d-1}+B_{d-2}},
\]
which gives
\[
B_{d-1}\alpha^2 + (a_0B_{d-1}+B_{d-2}-A_{d-1})\alpha -(a_0A_{d-1}+A_{d-2})=0.
\]
Since by assumption $tr(\alpha)=0$, this implies that
\begin{align} 
&B_{d-1}\alpha^2  -(a_0A_{d-1}+A_{d-2})=0, \label{eq:eq1} \\
& A_{d-1}=a_0B_{d-1}+B_{d-2}. \nonumber
\end{align}
\begin{proposition}\label{prop:dt} \
\begin{itemize}
    \item If $d=2t$ is even, then 
\begin{align*} a_0A_{d-1}+A_{d-2}&= A_{t-1}(A_t+A_{t-2}),\\
    B_{d-1}&=B_{t-1}(B_t+B_{t-2}).
\end{align*}
\item If $d=2t+1$ is odd, then 
\begin{align*} 
a_0A_{d-1}+A_{d-2}&= A_t^2+A_{t-1}^2,\\
    B_{d-1}&= B_t^2+B_{t-1}^2.
    \end{align*}
\end{itemize}
\end{proposition}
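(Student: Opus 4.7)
I would prove Proposition \ref{prop:dt} by a clean matrix computation that exploits the palindromy of the middle partial quotients.

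\textbf{Plan.} The first observation is that the two quantities appearing in the statement are entries of a single matrix: since $\mathcal{A}_0$ is symmetric,
\[
\mathcal{B}_{d-1}\mathcal{A}_0=\begin{pmatrix} A_{d-1} & A_{d-2} \\ B_{d-1} & B_{d-2}\end{pmatrix}\begin{pmatrix} a_0 & 1 \\ 1 & 0\end{pmatrix}=\begin{pmatrix} a_0A_{d-1}+A_{d-2} & A_{d-1} \\ a_0B_{d-1}+B_{d-2} & B_{d-1}\end{pmatrix},
\]
so $a_0A_{d-1}+A_{d-2}$ is the $(1,1)$ entry and $B_{d-1}$ is the $(2,2)$ entry. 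Using \eqref{eq:matriciale} this can be rewritten as
\[
\mathcal{B}_{d-1}\mathcal{A}_0=\mathcal{A}_0\,(\mathcal{A}_1\mathcal{A}_2\cdots\mathcal{A}_{d-1})\,\mathcal{A}_0.
\]

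\textbf{Exploiting palindromy.} Since each $\mathcal{A}_k$ is symmetric and the sequence $(a_1,\ldots,a_{d-1})$ is palindromic (i.e.\ $a_j=a_{d-j}$), the middle product factors nicely. In the odd case $d=2t+1$ the palindrome has even length $2t$, so $\mathcal{A}_{t+j}=\mathcal{A}_{t+1-j}$ for $j=1,\ldots,t$, and therefore $\mathcal{A}_{t+1}\cdots\mathcal{A}_{2t}=\mathcal{A}_t\cdots\mathcal{A}_1=(\mathcal{A}_1\cdots\mathcal{A}_t)^T$. In the even case $d=2t$ the palindrome has odd length $2t-1$ with central factor $\mathcal{A}_t$, and one similarly gets $\mathcal{A}_{t+1}\cdots\mathcal{A}_{2t-1}=(\mathcal{A}_1\cdots\mathcal{A}_{t-1})^T$. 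Setting $M_k=\mathcal{A}_1\cdots\mathcal{A}_k$, we therefore have
\[
\mathcal{A}_1\cdots\mathcal{A}_{d-1}=\begin{cases} M_{t-1}\,\mathcal{A}_t\,M_{t-1}^T & \text{if } d=2t,\\[2pt] M_t\,M_t^T & \text{if } d=2t+1.\end{cases}
\]

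\textbf{Conclusion.} Using $\mathcal{A}_0=\mathcal{A}_0^T$ and $\mathcal{A}_0 M_k=\mathcal{B}_k$, one obtains $\mathcal{A}_0 M_k M_k^T\mathcal{A}_0=\mathcal{B}_k\mathcal{B}_k^T$ in the odd case, and analogously in the even case the identity $\mathcal{B}_{t-1}\mathcal{A}_t=\mathcal{B}_t$ (a special case of \eqref{eq:matriciale}) yields
\[
\mathcal{B}_{d-1}\mathcal{A}_0=\mathcal{B}_t\,\mathcal{B}_{t-1}^T.
\]
Multiplying out $\mathcal{B}_t\mathcal{B}_{t-1}^T$ shows the $(1,1)$ and $(2,2)$ entries are $A_{t-1}(A_t+A_{t-2})$ and $B_{t-1}(B_t+B_{t-2})$ respectively, while $\mathcal{B}_t\mathcal{B}_t^T$ has diagonal entries $A_t^2+A_{t-1}^2$ and $B_t^2+B_{t-1}^2$. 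Reading these off finishes both cases.

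The main point, rather than any real obstacle, is to set up the product $\mathcal{B}_{d-1}\mathcal{A}_0$ (not $\mathcal{A}_0\mathcal{B}_{d-1}$) so that both target quantities appear simultaneously; once this is done, the symmetry of $\mathcal{A}_0$ and the palindromy of the middle block make the factorization immediate and the required identities fall out of one $2\times 2$ matrix multiplication.
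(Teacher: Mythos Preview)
Your proof is correct and is essentially identical to the paper's. The paper phrases it as considering the palindromic continued fraction $[a_0,a_1,\ldots,a_1,a_0]$ of length $d+1$ and observing that its matrix $\mathcal{B}_d$ equals $\mathcal{B}_t\mathcal{B}_{t-1}^T$ or $\mathcal{B}_t\mathcal{B}_t^T$ by \eqref{eq:matriciale}; this is exactly your computation of $\mathcal{B}_{d-1}\mathcal{A}_0$, just packaged differently.
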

\begin{proof}
Let us consider the palindromic $BCF$ continued fraction of length $d+1$ given by $[a_0,a_1,\ldots, a_1,a_0]$. By \eqref{eq:matriciale}, for this sequence we have that
$$\mathcal{B}_d=\begin{pmatrix} a_0A_{d-1}+A_{d-2} & A_{d-1}\\ a_0B_{d-1}+B_{d-2} & B_{d-1}\end{pmatrix}=\left\{\begin{array}{ll} \mathcal{B}_t\mathcal{B}_{t-1}^T &\hbox{ if $d=2t$}\\ \mathcal{B}_t\mathcal{B}_{t}^T &\hbox{ if $d=2t+1$}\end{array}\right . $$
The result follows by equaling the coefficients.
\end{proof}
From now on we will focus our attention on the periodicity properties of square roots of integer numbers lying in $\mathbb{Q}_p$. In the late eighties this problem was approached by Bedocchi, who proved the following facts.
\begin{itemize}
    \item There are no square roots of integer numbers having a $BCF$ expansion of period $1$ \cite[Prop. 1]{Bedocchi1989} and $3$ \cite[Prop. 1]{Bedocchi1990}.
    \item For every fixed odd number $k$, there are only finitely many square roots of integer numbers having a $BCF$ expansion of period $k$ \cite[Prop. 1]{Bedocchi1990}. 
    \item For $p>3$, there are infinitely many square roots of integers numbers having a $BCF$ expansion of period $2$ \cite[Prop. 2]{Bedocchi1989}.
\end{itemize}
It is then natural to ask what happens if we consider $BCF$ expansions of other even period length. Inspired by the proof of \cite[Prop. 2]{Bedocchi1989}, we are going to develop a general technique to produce many examples of square roots of integer numbers having a periodic $BCF$ of even period. In particular, Theorem \ref{teo:dueenne} will tell us that, for every natural number $n\ge 1$, there are infinitely many square roots of integer numbers having a $BCF$ expansion of period $2^n$. 

\medskip
Assume $d=2t$; then, by \eqref{eq:eq1} and Proposition \ref{prop:dt}, we have that 
\begin{equation}\label{eq:AB} B_{t-1}(B_t+B_{t-2})\alpha^2=A_{t-1}(A_t+A_{t-2}).
\end{equation} 
Let us now consider $\beta=p^{k}\sqrt{m}$, where $m\in\mathbb{Z}\setminus p\mathbb{Z}$ is not a perfect square and $k\geq 1$, and let us put $\alpha=\beta_1= \frac 1\beta=\frac {\sqrt{m}}{p^k m}$; then, $\alpha$ is a root of
$$p^{2k}m X^2 -1.$$
Our goal is to characterise the integers $p^{2k}m$ such that the $BCF$ expansion of $\alpha$ is periodic of the form \eqref{eq:tr0palindromo} with an even period $d=2t$, i.e.
\begin{align*} \alpha & =[a_0,\overline{a_1,\ldots, a_{t-1},a_t,a_{t-1},\ldots,a_1,2a_0}]\\
\beta & =[0,a_0,\overline{a_1,\ldots, a_{t-1},a_t,a_{t-1},\ldots,a_1,2a_0}].
\end{align*}

If $\alpha^2=\frac{1}{p^{2k}m}$, then \eqref{eq:eq1} and  \eqref{eq:AB} become
\begin{align}
\label{eq:ABm} & B_{t-1}(B_t+B_{t-2})=p^{2k}m A_{t-1}(A_t+A_{t-2}).\end{align}

In the following we will use the notation $\tilde x$, introduced in Section \ref{sec:pre}, to denote the prime-to-$p$ part of an element  $x\in \mathbb Z[\frac 1 p]$. i.e. $\tilde{x}=xp^{v_p(x)}$. Using that $\tilde{a}_n = p^{k_n}a_n$ and since $k=k_0$, by multiplying  \eqref{eq:ABm} by $p^{2K_{t-1}+k_t}$ we have 
\begin{align} \label{eq:ABmtilde}
&\tilde{B}_{t-1}(\tilde{B}_t+p^{k_t+k_{t-1}}\tilde{B}_{t-2})=m\tilde{A}_{t-1}(\tilde{A}_t+p^{k_t+k_{t-1}}\tilde{A}_{t-2}).
\end{align}

In the next definition we shall introduce a class of finite $BCF$s $[a_0,\ldots, a_{t-1}]$, called \emph{nice}. We will show in Theorem \ref{prop:tuno} that every nice $BCF$ can be completed in infinitely many different ways by adding a term $a_t\in\mathcal{Y}$ such that the $p$-adic limit of the infinite $BCF$ of the form $[a_0,\overline{a_1,a_2,\ldots, a_2, a_1,2a_0}]$ is equal to $\frac 1 {p^{k_0}\sqrt{m}}$ for some $m\in\mathbb{Z}$.
\begin{definition}\label{def:nice1} 
A finite $BCF$ $[a_0,\ldots, a_{t-1}]$  is \emph{nice} if:
\begin{itemize}
    \item[a)] $|a_0|_p>1$ and $|a_0|_\infty < \frac p 4$;
    \item[b)] $\left |\frac{ A_{t-1}}{ A_{t-2}}\right |_\infty > \frac 4 p$;
    \item[c)] there exists an integer $q$ such that $\tilde B_{t-1}\mid q \mid \tilde B_{t-1}^2 $ and the class of $q$ modulo $\tilde A_{t-1}^2$ belongs to the multiplicative subgroup generated by the class of $p$.
\end{itemize}
\end{definition}

\begin{example}
The $BCF$ given by $\left [ \frac{1}{p}, \frac{1}{p} \right]$ is nice. Indeed, condition $a$) is easily verified. To prove $b$), notice that
\[
\frac{A_1}{A_0}=a_1+\frac{1}{a_0}=\frac{1}{p}+p=\frac{p^2+1}{p} >\frac{4}{p}.
\]
Finally, in this case, $\tilde{B}_1=\tilde{a}_1=1$, so $c$) holds automatically.
\end{example}
\medskip
We remark that, when the partial quotients are positive, in some cases condition $b)$ of Definition \ref{def:nice1} is easier to verify, as showed by the following proposition:
\begin{proposition}\label{prop:rubannice} Let $[a_0,\ldots, a_{t-1}]$ be a $BCF$ such that $a_0,\ldots, a_{t-1}>0$ and assume that $a_{t-1} > \frac 4 p$. If conditions $a$) and $c$) of Definition \ref{def:nice1} are fulfilled, then $[a_0,\ldots, a_{t-1}]$ is nice.
\end{proposition}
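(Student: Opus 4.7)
The goal is to show that, under the extra hypotheses of positivity of all the partial quotients and $a_{t-1}>4/p$, condition $b)$ of Definition \ref{def:nice1} is automatic, so that nothing beyond $a)$ and $c)$ needs to be checked. Since $a)$ and $c)$ are in the hypotheses, the entire content of the proposition is that
\[
\left|\frac{A_{t-1}}{A_{t-2}}\right|_\infty>\frac{4}{p}
\]
holds whenever $a_0,\dots,a_{t-1}>0$ and $a_{t-1}>4/p$. That is a simple monotonicity statement about the convergent numerators of a continued fraction with positive partial quotients.

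\textbf{Plan.} First, I would prove by induction on $n$ that $A_n>0$ for every $n\geq -1$. The base case is $A_{-1}=1>0$ and $A_0=a_0>0$ (since $a_0>0$ by hypothesis). For the induction step, if $A_{n-2},A_{n-1}>0$ and $a_n>0$, then the recurrence $A_n=a_nA_{n-1}+A_{n-2}$ immediately gives $A_n>0$. This takes care of the $|\cdot|_\infty$ by letting us drop absolute values.

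Next, I would rewrite the ratio using the defining recurrence. For $t\geq 2$, dividing the identity $A_{t-1}=a_{t-1}A_{t-2}+A_{t-3}$ by the positive quantity $A_{t-2}$ gives
\[
\frac{A_{t-1}}{A_{t-2}}=a_{t-1}+\frac{A_{t-3}}{A_{t-2}}>a_{t-1}>\frac{4}{p},
\]
where the first inequality uses that both $A_{t-3}$ and $A_{t-2}$ are positive (note that for $t=2$ this reads $A_{-1}/A_0=1/a_0>0$, and for $t\geq 3$ both terms are covered by the induction above). For the edge case $t=1$, the ratio is simply $A_0/A_{-1}=a_0=a_{t-1}>4/p$, which already matches the required bound.

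\textbf{Main difficulty.} There is essentially no obstacle: once positivity of the $A_n$ is established, the recurrence $A_{t-1}/A_{t-2}=a_{t-1}+A_{t-3}/A_{t-2}$ directly lower-bounds the ratio by $a_{t-1}$, and the hypothesis $a_{t-1}>4/p$ closes the argument. The only mild subtlety is bookkeeping the small-$t$ cases, but all of them reduce to the same monotonicity observation.
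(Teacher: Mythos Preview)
Your proposal is correct and follows exactly the paper's approach: rewrite $A_{t-1}/A_{t-2}=a_{t-1}+A_{t-3}/A_{t-2}$ via the recurrence, use positivity of the partial quotients (hence of all $A_n$) to bound this below by $a_{t-1}$, and conclude from $a_{t-1}>4/p$. Your version is slightly more explicit in handling the small-$t$ edge cases and spelling out the induction for $A_n>0$, but the argument is the same.
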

\begin{proof}
 We have  $$\frac{A_{t-1}}{A_{t-2}}=a_{t-1}+\frac{A_{t-3}}{A_{t-2}}.$$
 Since all partial quotients are positive, then
 $$\left | \frac{A_{t-1}}{A_{t-2}}\right |_\infty\geq |a_{t-1}|_\infty >\frac 4 p$$
 by hypothesis, so that condition $b$) of Definition \ref{def:nice1} is also satisfied.
\end{proof}
\begin{remark} Condition $c$) in Definition \ref{def:nice1} is fulfilled in the following particular cases
 \begin{itemize}
     \item $\tilde B_{t-1}=\pm 1$;
     \item $\pm p$ is a primitive root modulo $\tilde{A}_{t-1}^2$. In this case, it is well known that $\tilde{A}_{t-1}$ must be either $2$ or a power of an odd prime. 
 \end{itemize}
These facts can be exploited in order to produce examples, as we shall see in Section \ref{sect:particularcases}.
\end{remark}

In the following theorem we will give the general construction which will allow us, starting from a suitable nice $BCF$, to give infinitely many examples of periodic $BCF$ expansions of square roots of integers in $\QQ_p$ of even period. 

\begin{theorem}\label{prop:tuno} Let $[a_0,\ldots, a_{t-1}]$ be a nice $BCF$; then, there exist infinitely many $a_t\in \mathcal{Y}$  such that the periodic $BCF$ $[a_0,\overline{a_1,\ldots,a_{t-1}, a_t,a_{t-1},\ldots,a_1, 2a_0}]$ converges to a quadratic irrational number of the form $\frac 1 {p^{k_0}\sqrt{m}}$ for some $m\in\mathbb{Z}$. The 
$a_t$ can be chosen of the form $a_t=2c_t$, with $c_t\in \mathcal{Y}$.\end{theorem}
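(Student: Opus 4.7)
The plan is to translate the desired conclusion into an integrality condition for $m$ given by \eqref{eq:ABmtilde}, and to use the three items of Definition \ref{def:nice1} in turn to produce infinitely many solutions. Writing $R=\tilde A_{t-1}$, $S=\tilde B_{t-1}$, $D=\tilde A_t+p^h\tilde A_{t-2}=\tilde a_t R+2p^h\tilde A_{t-2}$, $N=\tilde a_t S+2p^h\tilde B_{t-2}$, and $h=k_t+k_{t-1}$, equation \eqref{eq:ABmtilde} reads $SN=mRD$. The tilde version of \eqref{eq:ABrel} gives $\tilde A_{t-1}\tilde B_{t-2}-\tilde A_{t-2}\tilde B_{t-1}=(-1)^t p^M$ with $M=k_0+K_{t-1}+K_{t-2}$, hence $RN-SD=(-1)^t 2p^{h+M}$; eliminating $N$ produces the key identity
\[
D\,(mR^2-S^2)=(-1)^t\,2S\,p^{h+M}.
\]
Because $D\equiv \tilde a_t R\pmod p$ is coprime to $p$, integrality of $m$ is equivalent to $D\mid 2S$ together with $(-1)^t 2Sp^{h+M}/D\equiv -S^2\pmod{R^2}$.

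The heart of the construction uses item (c). Let $q=Sq_0$ (with $q_0\mid S$) be the integer of Definition \ref{def:nice1}(c), so that $q\equiv p^j\pmod{R^2}$ for some $j\ge 0$. I set $D:=(-1)^{t+1}\,2q_0$, which divides $2S$ automatically, and pick $h\ge k_{t-1}+1$ in the arithmetic progression $h\equiv j-M\pmod{\operatorname{ord}_{R^2}(p)}$; then $p^{h+M}\equiv Sq_0\pmod{R^2}$ and substitution yields
\[
m=\frac{S^2-S p^{h+M}/q_0}{R^2}\in\ZZ.
\]
Infinitely many $h$ satisfy these conditions.

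It remains to verify that $\tilde a_t=(D-2p^h\tilde A_{t-2})/R$ is an even integer coprime to $p$ with $a_t=\tilde a_t/p^{k_t}\in\mathcal Y$. Reducing $R\tilde B_{t-2}-S\tilde A_{t-2}=(-1)^t p^M$ modulo $R$ and using $p^{h+M}\equiv Sq_0\pmod R$ gives $p^h\tilde A_{t-2}\equiv (-1)^{t+1}q_0\pmod R$, so the numerator of $\tilde a_t$ lies in $2R\cdot\ZZ$: thus $\tilde a_t\in 2\ZZ$ and, as $q_0$ is coprime to $p$, so is $\tilde a_t$. For the size bound, $|\tilde a_t|\sim 2p^h|\tilde A_{t-2}|/|R|$ as $h\to\infty$, so
\[
|a_t|_\infty\;\longrightarrow\;\frac{2}{|A_{t-1}/A_{t-2}|_\infty},
\]
which is strictly less than $p/2$ by (b); hence $a_t\in\mathcal Y$ and $a_t=2c_t$ with $c_t\in\mathcal Y$ for every sufficiently large admissible $h$. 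Condition (a) guarantees $2a_0\in\mathcal Y$ with $|2a_0|_p>1$, so every entry of the proposed period is legitimate; Browkin's convergence theorem shows the expansion converges in $\QQ_p$ to an $\alpha$ whose Browkin algorithm recovers these partial quotients, and the derivation leading to \eqref{eq:ABm} forces $\alpha^2=1/(p^{2k_0}m)$. A mod-$p$ reduction of the formula for $m$ gives $m\equiv (S/R)^2\pmod p$, so $p\nmid m$ and $\sqrt m\in\QQ_p$; $m$ cannot be a perfect square, for then $\alpha\in\QQ$ would have finite $BCF$. Distinct $h$ yield distinct pairs $(\tilde a_t,k_t)$ and hence distinct $a_t$, producing infinitely many examples. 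The main obstacle lies in the second step: the very specific choice $D=(-1)^{t+1}2q_0$ together with the congruence on $h$ is what simultaneously meets both $D\mid 2S$ and the mod-$R^2$ integrality condition, and item (c) of niceness is engineered precisely to make this possible.
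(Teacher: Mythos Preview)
Your argument is correct and is essentially the paper's own proof, reorganised. You first isolate the key identity $D(mR^2-S^2)=(-1)^t 2Sp^{h+M}$ and then choose $D=(-1)^{t+1}2q_0$ with $q_0=q/\tilde B_{t-1}$ and $h+M$ in the right residue class; the paper instead writes down the element $\tilde c=(-p^{k_t+k_{t-1}}\tilde A_{t-2}+(-1)^{t-1}q/\tilde B_{t-1})/\tilde A_{t-1}$ directly and verifies its properties, but since your $D$ equals $2(-1)^{t-1}q/\tilde B_{t-1}$ and your $h+M$ equals the paper's $\omega$, the two constructions coincide term by term (your $m=(S^2-Sp^{h+M}/q_0)/R^2$ is exactly the paper's $m=-bq_1$). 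The only cosmetic differences are that you argue the size bound via the limit $|a_t|_\infty\to 2/|A_{t-1}/A_{t-2}|_\infty$ whereas the paper bounds $|\tilde c|_\infty$ explicitly, and you note explicitly that the palindromic shape forces $\mathrm{tr}(\alpha)=0$ so that \eqref{eq:ABm} determines $\alpha^2$; both points are implicit in the paper's derivation.
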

\begin{proof}
Let $[a_0,\ldots, a_{t-1}]$ be a nice $BCF$ of length $t$. By condition $c)$ of Definition \ref{def:nice1}, there exists $q\in \ZZ$ such that $q \equiv p^{\omega} \mod{\tilde{A}_{t-1}^2}$ for some $\omega\in \ZZ$. Eventually adding suitable multiples of the multiplicative order of $p$, we can take $\omega> k_0+2K_{t-1}$. 
Put
\begin{align*} 
b&=\frac {p^\omega - q}{\tilde A_{t-1}^2};\\
k_t & =\omega - k_0 -2K_{t-1};\\
\tilde c &= \frac{ -p^{k_t+k_{t-1}}\tilde{A}_{t-2}+ (-1)^{t-1}\frac q {\tilde{B}_{t-1}}}{\tilde{A}_{t-1}}.
\end{align*}
By construction $k_t>0$ and, as by assumption $\tilde{B}_{t-1} \mid q$, we have that $-p^{k_t+k_{t-1}} \tilde{A}_{t-2}+(-1)^{t-1} \frac{q}{\tilde{B}_{t-1}}$ is an integer. We show that $\tilde c\in\mathbb{Z}$. Indeed,
\begin{align*}
 \tilde{B}_{t-1}\left ( -p^{k_t+k_{t-1}}\tilde{A}_{t-2}+ (-1)^{t-1} \frac q {\tilde{B}_{t-1}}\right ) &=  -p^{k_t+k_{t-1}}\tilde{A}_{t-2}\tilde{B}_{t-1}+(-1)^{t-1}q,
\end{align*}
and, since by \eqref{eq:ABrel} $\tilde{A}_{t-1}\tilde{B}_{t-2}-\tilde{A}_{t-2}\tilde{B}_{t-1}=(-1)^tp^{k_0+2K_{t-2}+k_{t-1}}$, using the definitions of $b$ and $k_t$ we get
\begin{align*}
\tilde{B}_{t-1}\left ( -p^{k_t+k_{t-1}}\tilde{A}_{t-2}+ (-1)^{t-1} \frac q {\tilde{B}_{t-1}}\right ) &= (-1)^tp^\omega - p^{k_t+k_{t-1}} \tilde{A}_{t-1}\tilde{B}_{t-2}+(-1)^{t-1}q\nonumber\\
    &=(-1)^t b\tilde A_{t-1}^2 - p^{k_t+k_{t-1}} \tilde{A}_{t-1}\tilde{B}_{t-2}\nonumber \\
    &= \tilde{A}_{t-1}( (-1)^t b\tilde A_{t-1} -p^{k_t+k_{t-1}} \tilde{B}_{t-2}).
\end{align*}
Since $(\tilde{A}_{t-1},\tilde{B}_{t-1})=1$, it follows that $\tilde{A}_{t-1}$ divides  $ -p^{k_t+k_{t-1}}\tilde{A}_{t-2}+ (-1)^{t-1} \frac q {\tilde{B}_{t-1}}$, proving that $\tilde{c}\in \ZZ$.
Moreover by the above calculations we obtain
\begin{align}
    \tilde c \tilde A_{t-1}+p^{k_t+k_{t-1}}\tilde A_{t-2} & = (-1)^{t-1}\frac q {\tilde B_{t-1}}\label{eq:tildeAt},\\
    \tilde c \tilde B_{t-1}+p^{k_t+k_{t-1}}\tilde B_{t-2} & =(-1)^tb\tilde A_{t-1}.\label{eq:tildeBt}
\end{align}
We want now to show that, if $\omega$ is sufficiently large, then we have $\frac{2\tilde c}{p^{k_t}}\in \mathcal{Y}$. Indeed, since $q\mid\tilde{B}_{t-1}^2$, we have
$$|\tilde c|_\infty \leq  p^{k_t+k_{t-1}} \left |\frac{\tilde{A}_{t-2}}{\tilde{A}_{t-1}}\right |_\infty + \left |\frac {\tilde{B}_{t-1}} {\tilde{A}_{t-1}}\right |_\infty.$$
Using the recurrence formulas \eqref{eq:ABtilde}, we have
$$\frac{\tilde{A}_{t-1}}{\tilde{A}_{t-2}}= p^{k_{t-1}} \frac{A_{t-1}}{{A}_{t-2}}\quad \mbox{and} \quad \frac{\tilde{A}_{t-1}}{\tilde{B}_{t-1}}= p^{k_0} \frac{{A}_{t-1}}{{B}_{t-1}}, $$
so that 
\begin{align*} |\tilde c|_\infty \leq  {p^{k_t}}\left |\frac{{A}_{t-2}}{{A}_{t-1}}\right |_\infty  +\frac 1 {p^{k_0}} \left |\frac{{B}_{t-1}}{{A}_{t-1}}\right |_\infty & = p^{k_t}\left (\left |\frac{{A}_{t-2}}{{A}_{t-1}}\right |_\infty + \frac 1 {p^{k_0+k_t}} \left |\frac{{B}_{t-1}}{{A}_{t-1}}\right |_\infty \right ) \\ & < p^{k_t}\left (\frac p 4 + \frac 1 {p^{k_0+k_t}} \left |\frac{{B}_{t-1}}{{A}_{t-1}}\right |_\infty \right ),
\end{align*}
where we used condition $b$) of Definition \ref{def:nice1}. Now, if we choose $\omega$ large enough (and hence $k_t$), we have $|\tilde{c}|_{\infty} < \frac{p^{k_t+1}}{4}$, so $\left |\frac{2\tilde{c}}{p^{k_t}} \right |_{\infty}<\frac{p}{2}$ as wanted.
We can now put $a_t=\frac{2\tilde c}{p^{k_t}}$ and consider the finite $BCF$ given by 
$[a_0,a_1,\ldots, a_{t-1},a_t]$.
By \eqref{eq:tildeAt} and \eqref{eq:tildeBt} we have
\begin{align*}\tilde A_{t}+p^{k_t+k_{t-1}}\tilde A_{t-2} &=2(\tilde c \tilde A_{t-1} +p^{k_t+k_{t-1}}\tilde A_{t-2})\\
&= 2(-1)^{t-1}\frac q {\tilde B_{t-1}};\\
\tilde B_{t}+p^{k_t+k_{t-1}}\tilde B_{t-2} &=2(\tilde c \tilde B_{t-1} +p^{k_t+k_{t-1}}\tilde B_{t-2})\\
&=2(-1)^t b\tilde A_{t-1}.
\end{align*} 
Therefore $2\tilde A_{t-1}$ divides $\tilde B_{t}+p^{k_t+k_{t-1}}\tilde B_{t-2}$ and $\tilde A_{t}+p^{k_t+k_{t-1}}\tilde A_{t-2}$ divides $2\tilde B_{t-1}$. Let $q_1=\frac{\tilde B_{t-1}^2}{q}$; then, 
\begin{align*} 2\tilde B_{t-1} (\tilde B_{t}+p^{k_t+k_{t-1}}\tilde B_{t-2}) & =(-1)^{t-1}q_1(\tilde A_t+p^{k_t+k_{t-1}}\tilde A_{t-2})(-1)^t2b\tilde A_{t-1} \\
&= - 2bq_1 \tilde A_{t-1}(\tilde A_t+p^{k_t+k_{t-1}}\tilde A_{t-2}).
\end{align*}
Then equation \eqref{eq:ABmtilde} holds with $m=-bq_1$, proving that, if we take $\alpha$ the limit of the $BCF$ given by
$[a_0,\overline{a_1, \ldots, a_{t-1}, a_t, a_{t-1}, \ldots, a_1, 2a_0 }]$, then it is a root of the polynomial $p^{2k}m X^{2}-1=0$ as wanted.
 \end{proof}
 \subsection{Conjectures about niceness}
 
Using Theorem $\ref{prop:tuno}$, if we start with a nice sequence $[a_0,\ldots, a_{t-1}]$, one can provide a collection of integers $m$, coprime to $p$, such that the $BCF$ expansion of $ {p^{k_0}\sqrt m}$ is periodic of period $2t$. The existence of a nice sequence of length $t$ for every $t\geq 1$ has some experimental support (see Section \ref{sect:particularcases}). We  formulate the following: 

\begin{conjecture}
For every $t\geq 1$ there exists a nice $BCF$ of length $t$, except when $t=1$ and $p=3$.
\end{conjecture}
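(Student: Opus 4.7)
The plan is to construct, for each $t$ (with the stated exception), an explicit family of finite $BCF$s satisfying Definition \ref{def:nice1}. Conditions (a) and (b) are mild: (a) is a constraint on a single rational number $a_0\in\mathcal{Y}$, while (b) is a size condition on the ratio $A_{t-1}/A_{t-2}$ that holds as soon as the partial quotients are not pathologically small (in particular it is immediate for positive partial quotients bounded below by $4/p$, as in Proposition \ref{prop:rubannice}). So my main work is arranging condition (c), which says that $\tilde B_{t-1}d\equiv p^\omega \pmod{\tilde A_{t-1}^2}$ for some divisor $d$ of $\tilde B_{t-1}$ and some integer $\omega$.

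\textbf{Base cases.} For $t=1$, $\tilde B_0=1$ so (c) is free, and (a)+(b) together demand $a_0\in\mathcal{Y}$ with $4/p<|a_0|_\infty<p/4$. The interval is nonempty exactly when $p\geq 5$ (this is why $p=3$ is excluded), and one checks that, e.g., $a_0=c_0/p$ with $4<|c_0|<p^2/4$, $p\nmid c_0$, does the job. For $t=2$, the example $[1/p,1/p]$ in the paper already yields a nice $BCF$ for every $p\geq 3$, covering the first nontrivial length for $p=3$.

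\textbf{General $t$.} For $t\geq 3$ I would try to build nice $BCF$s inductively, exploiting the two sufficient cases listed after Definition \ref{def:nice1}: either force $\tilde B_{t-1}=\pm 1$, in which case (c) holds trivially, or arrange that $\tilde A_{t-1}$ is a prime power (or $2$) for which $\pm p$ is a primitive root modulo $\tilde A_{t-1}^2$. The plan is to begin with a nice prefix of length $t_0$ (given by a base case) and extend by partial quotients of the form $a_i=\pm 1/p^{k_i}$, using the freedom in choosing the sign, the numerator, and the valuation $k_i$ of each new term. By \eqref{eq:ABtilde} this gives many degrees of freedom in $(\tilde A_{t-1},\tilde B_{t-1})$, and one can hope to steer these pairs into a congruence class satisfying (c). A complementary, and perhaps more robust, tack is a density/counting argument: for each fixed length $t$ and bound $N$, consider the finite set of $BCF$s of length $t$ with partial quotients of bounded height, and show that a positive proportion of them produce $(\tilde A_{t-1},\tilde B_{t-1})$ for which condition (c) holds.

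\textbf{Main obstacle.} The crux is condition (c), which is essentially a question about the subgroup $\langle p\rangle\subseteq (\ZZ/\tilde A_{t-1}^2\ZZ)^*$: one needs the class of some multiple of $\tilde B_{t-1}$ (bounded above by $\tilde B_{t-1}^2$) to fall inside $\langle p\rangle$. This subgroup can be small, its size is governed by the multiplicative order of $p$ modulo $\tilde A_{t-1}^2$, and as $t$ grows $\tilde A_{t-1}$ is produced by a complicated recursion that offers no obvious arithmetic control. In particular, even the case $\tilde A_{t-1}$ prime (where one hopes $p$ is a primitive root) would require input of Artin-conjecture type, and the case of composite $\tilde A_{t-1}$ is worse. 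I therefore expect that a full proof of the conjecture will require either a clever explicit recipe that bypasses controlling $\langle p\rangle$ altogether (for instance by always achieving $\tilde B_{t-1}=\pm 1$ through a tailored choice of the last partial quotient), or nontrivial analytic number theory on the joint distribution of $(\tilde A_{t-1},\tilde B_{t-1})$ modulo varying moduli.
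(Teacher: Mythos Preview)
The statement you are attempting to prove is stated in the paper as a \emph{conjecture}, not a theorem; the paper does not contain a proof of it, and your proposal, as you yourself acknowledge in the final paragraph, is not a proof either but a discussion of strategy together with an honest assessment of the obstruction. So there is no ``paper's own proof'' to compare against.

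That said, it is worth noting how your outline lines up with what the paper \emph{does} prove. The paper establishes the conjecture only for $t=1,2,3,5$ and $t=2^n$ (Propositions \ref{prop:tre}, \ref{prop:cinque}, \ref{prop:betanice} and the surrounding discussion), and in every one of those cases the method is precisely the first of the two tacks you single out: construct an explicit $BCF$ for which $\tilde B_{t-1}=\pm 1$, so that condition (c) becomes vacuous. Your diagnosis that condition (c) is the crux, and that the cleanest way around it is to arrange $\tilde B_{t-1}=\pm 1$, is therefore exactly the viewpoint the paper adopts in the cases it can handle. The paper offers no evidence that the alternative routes you mention (primitive-root arguments, density/counting) can be made to work for general $t$; indeed the passage following Definition \ref{def:nice1} and the discussion of Artin's conjecture in the $t=2$ case make clear that the authors regard these as heuristic support rather than as a path to a proof.

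In short: your proposal correctly identifies both the difficulty and the only approach that has so far succeeded, but it does not supply the missing ingredient---a uniform construction, for every $t$, of a length-$t$ $BCF$ with $\tilde B_{t-1}=\pm 1$ (or some other device rendering (c) automatic). Neither does the paper; that is why the statement remains a conjecture.
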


We also obtain experimental confirmations for the following stronger assertions (with the exception $p=3$, $t=1$): 

\begin{conjecture}
For every $a_0,\ldots, a_{t-2}>0$ in $\mathcal{Y}$ with $|a_0|_\infty < \frac p 4$ there exists $a_{t-1}\in \mathcal{Y}$ such that the $BCF$ $[a_0,\ldots, a_{t-1}]$ is nice.
\end{conjecture}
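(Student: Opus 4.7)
The plan is to satisfy the three conditions of Definition \ref{def:nice1} in turn. Condition (a) depends only on $a_0$, which is fixed by hypothesis. Since all partial quotients $a_0,\ldots,a_{t-2}$ are positive, Proposition \ref{prop:rubannice} implies that condition (b) holds for any $a_{t-1}\in\mathcal{Y}$ with $a_{t-1}>4/p$. Thus the real task is to choose such a positive $a_{t-1}$ making condition (c) hold as well.

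For condition (c), the approach suggested by the remark after Definition \ref{def:nice1} is to arrange $\tilde A_{t-1}$ to be a prime $\ell$ for which $p$ is a primitive root modulo $\ell^2$: in that case $(\ZZ/\ell^2\ZZ)^\times=\langle p\rangle$, and since $\gcd(\tilde A_{t-1},\tilde B_{t-1})=1$ (the usual identity \eqref{eq:ABrel} forces this gcd to divide a power of $p$, hence to be $1$ since both factors are prime-to-$p$), the choice $q=\tilde B_{t-1}^2$ trivially works. By the recurrence \eqref{eq:ABtilde},
\[ \tilde A_{t-1}=\tilde a_{t-1}\tilde A_{t-2}+p^{k_{t-1}+k_{t-2}}\tilde A_{t-3}, \]
so as $\tilde a_{t-1}$ ranges over positive integers coprime to $p$ (with $k_{t-1}$ allowed to grow large enough to ensure $a_{t-1}=\tilde a_{t-1}/p^{k_{t-1}}$ stays in $\mathcal{Y}$), the value $\tilde A_{t-1}$ traces out an arithmetic progression modulo $\tilde A_{t-2}$ whose constant term is coprime to the modulus. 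Dirichlet's theorem on primes in arithmetic progressions then produces infinitely many primes $\ell$ in this set.

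The main obstacle is the final step: exhibiting even a single prime $\ell$ in this progression for which $p$ is a primitive root modulo $\ell^2$. This is essentially a variant of Artin's primitive root conjecture in arithmetic progressions, which is not known unconditionally; under GRH a theorem of Hooley (refined to progressions) would yield a positive density of such primes and complete the argument. An alternative, elementary strategy would be to force $\tilde B_{t-1}=\pm 1$ by solving
\[ \tilde a_{t-1}\tilde B_{t-2}+p^{k_{t-1}+k_{t-2}}\tilde B_{t-3}=\pm 1, \]
in which case $q=1$ verifies condition (c) trivially; however, such a solution exists only when $\tilde B_{t-3}^{-1}\in\langle p\rangle$ (or $-\tilde B_{t-3}^{-1}\in\langle p\rangle$) modulo $\tilde B_{t-2}$, a condition that is not automatically inherited from the hypotheses on $a_0,\ldots,a_{t-2}$. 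This intrinsic number-theoretic obstruction at the level of the group $(\ZZ/\tilde A_{t-1}^2\ZZ)^\times$ is presumably why the authors stop at the conjectural statement rather than a full proof.
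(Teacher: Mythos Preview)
The statement you are attempting to prove is labeled a \emph{Conjecture} in the paper, and the paper offers no proof of it: the authors state explicitly that they have only ``experimental confirmations'' for it, and they list it alongside two companion conjectures, all of which would imply Conjecture~\ref{prop:conjprin}. So there is no ``paper's own proof'' to compare against.

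Your proposal is, accordingly, not a proof either, and you correctly identify the reason. The reduction you sketch is sound as far as it goes: condition~(a) is automatic from the hypotheses (modulo the implicit assumption $|a_0|_p>1$, which the conjecture statement omits but clearly intends), and Proposition~\ref{prop:rubannice} handles condition~(b) once $a_{t-1}>4/p$. For condition~(c), your plan to force $\tilde A_{t-1}$ to be a prime $\ell$ with $p$ a primitive root modulo $\ell^2$ is exactly the heuristic the paper itself invokes in the $t=2$ discussion (Section~\ref{sect:particularcases}), where the authors write that ``the above considerations justify to conjecture the existence'' of such a prime in a suitable interval. Your alternative of forcing $\tilde B_{t-1}=\pm 1$ is the other mechanism highlighted in the Remark after Definition~\ref{def:nice1}, and you are right that it imposes a congruence condition on the given data $a_0,\ldots,a_{t-2}$ that need not hold in general.

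In short, your analysis is consistent with the paper's: the obstruction is genuinely of Artin primitive-root type, and the authors do not claim to overcome it. Your write-up would be better framed as an explanation of \emph{why} the statement is conjectural rather than as a proof attempt.
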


\begin{conjecture}
For every $t\geq 1$ there exists $a_{t+1}\in \mathcal{Y}$ such that the $BCF$ $\left [\frac 1 p,\ldots,\frac 1 p, a_{t-1}\right ]$ is nice.
\end{conjecture}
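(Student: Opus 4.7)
The plan is to convert the existence of a suitable $a_t$ into a solvable system of congruences and divisibility conditions, each of which is guaranteed by one of the clauses in the definition of niceness. From the discussion preceding the theorem, demanding that the desired limit equal $1/(p^{k_0}\sqrt m)$ for some $m\in\ZZ$ reduces to equation \eqref{eq:ABmtilde}. Substituting $a_t = 2c_t$ (so $\tilde a_t = 2\tilde c_t$) pulls a factor of $2$ out of both bracketed quantities, and the target simplifies to $\tilde B_{t-1}Y = m\tilde A_{t-1}X$, where $X := \tilde c_t\tilde A_{t-1} + p^{k_t+k_{t-1}}\tilde A_{t-2}$ and $Y := \tilde c_t\tilde B_{t-1} + p^{k_t+k_{t-1}}\tilde B_{t-2}$.

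The first step is to record the $\tilde{}$-version of \eqref{eq:ABrel}, which after inserting the appropriate powers of $p$ yields $\tilde A_{t-1}Y - \tilde B_{t-1}X = (-1)^t p^\omega$ with $\omega := k_0 + 2K_{t-1} + k_t$. I would then invoke niceness condition (c) to pick an integer $q$ satisfying $\tilde B_{t-1}\mid q\mid \tilde B_{t-1}^2$ and $q\equiv p^\omega \pmod{\tilde A_{t-1}^2}$; because (c) places the class of $q$ in the multiplicative subgroup generated by $p$, I can replace $\omega$ by $\omega + r\cdot\mathrm{ord}_{\tilde A_{t-1}^2}(p)$ to make $\omega$ (and hence $k_t := \omega - k_0 - 2K_{t-1} > 0$) as large as needed, so each sufficiently large admissible $\omega$ will eventually yield a new $a_t$.

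With $q$ fixed, the idea is to \emph{force} $X = (-1)^{t-1}q/\tilde B_{t-1}$; this pins down $\tilde c_t = \bigl((-1)^{t-1}q/\tilde B_{t-1} - p^{k_t+k_{t-1}}\tilde A_{t-2}\bigr)/\tilde A_{t-1}$, and its integrality follows by combining $q\equiv p^\omega \pmod{\tilde A_{t-1}^2}$ with the Bezout relation above taken modulo $\tilde A_{t-1}$, using $\gcd(\tilde A_{t-1},\tilde B_{t-1})=1$. Substituting this $\tilde c_t$ back into $Y$ and applying the Bezout relation a second time should give $Y = (-1)^t b\tilde A_{t-1}$ with $b := (p^\omega - q)/\tilde A_{t-1}^2 \in \ZZ$. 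Then $m = \tilde B_{t-1}Y/(\tilde A_{t-1}X) = -b\tilde B_{t-1}^2/q$ is an integer precisely because $q\mid \tilde B_{t-1}^2$, and equation \eqref{eq:ABmtilde} holds by construction; since the palindromic period forces $\mathrm{tr}(\alpha)=0$, the resulting limit $\alpha$ must equal $\pm 1/(p^{k_0}\sqrt m)$.

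The main technical obstacle will be the Archimedean verification that $a_t = 2\tilde c_t/p^{k_t}$ actually lies in $\mathcal Y$, equivalently $|\tilde c_t|_\infty < p^{k_t+1}/4$. Rewriting with $\tilde A_n = p^{k_0+K_n}A_n$ shows that $p^{k_t+k_{t-1}}|\tilde A_{t-2}/\tilde A_{t-1}|_\infty = p^{k_t}|A_{t-2}/A_{t-1}|_\infty$, so the dominant contribution to $|\tilde c_t|_\infty$ has size $p^{k_t}|A_{t-2}/A_{t-1}|_\infty$, while the correction term $|q/(\tilde B_{t-1}\tilde A_{t-1})|_\infty$ is bounded (using $q\mid \tilde B_{t-1}^2$). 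It is precisely niceness condition (b), $|A_{t-1}/A_{t-2}|_\infty > 4/p$, that pushes the dominant term strictly below $p\cdot p^{k_t}/4$; enlarging $\omega$ (hence $k_t$) makes the correction negligible against this strict inequality, securing $a_t\in\mathcal Y$ and, together with the infinite family of admissible $\omega$'s, completing the argument.
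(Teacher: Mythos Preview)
Your proposal does not address the stated conjecture at all. The conjecture asks you to \emph{produce} a nice $BCF$ of the special shape $[1/p,\ldots,1/p,a_{t-1}]$: that is, to find a last partial quotient $a_{t-1}$ so that conditions (a), (b), (c) of Definition~\ref{def:nice1} hold for this sequence. Instead, your argument \emph{assumes} a nice $BCF$ $[a_0,\ldots,a_{t-1}]$ is already given and then constructs an additional term $a_t$ so that the periodic palindromic extension converges to $1/(p^{k_0}\sqrt m)$. In other words, you have written a proof of Theorem~\ref{prop:tuno}, not of the conjecture; you invoke niceness conditions (b) and (c) as hypotheses rather than establishing them.

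Note moreover that the statement in question is explicitly a \emph{conjecture} in the paper: the paper offers no proof of it, only experimental support. A genuine attempt would have to control $\tilde A_{t-1}$ and $\tilde B_{t-1}$ for the sequence $[1/p,\ldots,1/p,a_{t-1}]$ well enough to verify (b) and (c), and in particular to guarantee the existence of the integer $q$ demanded by (c); nothing in your write-up approaches this. Your argument for Theorem~\ref{prop:tuno} is, incidentally, essentially the paper's own proof of that theorem, but that is beside the point here.
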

By Theorem \ref{prop:tuno}, the truth of any of the above assertions would imply the following result:
\begin{conjecture}\label{prop:conjprin} For every prime $p$ and for every $t\geq 1$ except in the case $p=3$, $t=1$ there are infinitely many integers $p^{2k}m$ with $k \geq 1 $ and $m\in\mathbb{Z}\setminus p\mathbb{Z}$ not a square such that $\alpha=\frac 1 {p^k\sqrt{m}}$ has a $BCF$ of the form $$[a_0,\overline{a_1,\ldots,a_{t-1}, a_t,a_{t-1},\ldots,a_1, 2a_0}]$$
and $a_t=2c$, with $c\in \mathcal{Y}$.\\
In particular  there are infinitely many $b\in\mathbb{Z}$ such that the $BCF$ expansion of $\sqrt{b}$ has period $2t$.
\end{conjecture}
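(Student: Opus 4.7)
The plan is to reduce the conjecture, via Theorem \ref{prop:tuno}, to the purely existential statement that a nice finite BCF of length $t$ exists for every $t\geq 1$ and every prime $p>2$, with the single exception $(p,t)=(3,1)$. Once one nice sequence is exhibited, Theorem \ref{prop:tuno} automatically produces infinitely many admissible integers $m$ (from the infinitely many choices of the exponent $\omega$), so all the work collapses onto an existence problem for finite sequences in $\mathcal{Y}$ subject to conditions (a), (b), (c) of Definition \ref{def:nice1}.

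Conditions (a) and (b) can be arranged essentially by hand. Condition (a) is satisfiable for every $p\geq 3$, e.g.\ by $a_0=1/p$. For $t=1$, condition (b) reads $|a_0|_\infty>4/p$, which together with (a) forces $p\geq 5$, accounting exactly for the exceptional case $(p,t)=(3,1)$. For $t\geq 2$, I would pick $a_1,\ldots, a_{t-2}$ to be positive in $\mathcal{Y}$ and choose $a_{t-1}$ positive with $|a_{t-1}|_\infty>4/p$; Proposition \ref{prop:rubannice} then supplies (b) automatically.

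The remaining hurdle is condition (c): producing an integer $q$ with $\tilde B_{t-1}\mid q\mid \tilde B_{t-1}^2$ whose class modulo $\tilde A_{t-1}^2$ lies in $\langle p\rangle$. A natural simplification is to engineer $\tilde B_{t-1}=\pm 1$, in which case $q=\pm 1$ is the only admissible choice and (c) reduces to $\pm 1\in \langle p\rangle \bmod \tilde A_{t-1}^2$; the value $+1$ always works since $\gcd(p,\tilde A_{t-1})=1$. Thus a clean sub-goal is to exhibit, for every $t$ in the allowed range, a sequence of positive partial quotients satisfying (a), (b), and $\tilde B_{t-1}=\pm 1$. I would attempt this either by an inductive doubling argument (which presumably underlies the authors' proof of the $2^n$ case, where the palindromic structure of the proposed expansion forces favorable behaviour on the $\tilde B$-sequence), or by fixing $a_0,\ldots,a_{t-2}$ and scanning over $a_{t-1}$, hoping to hit small values of $\tilde B_{t-1}$.

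I expect the decisive obstacle to be controlling $\tilde B_{t-1}$, or, failing that, the joint image of $(\tilde A_{t-1},\tilde B_{t-1})$ in the appropriate finite quotient group, uniformly in $t$. For $t=2^n$ a doubling construction appears to force enough algebraic structure on these quantities to push the argument through; for arbitrary $t$ no such structure is visible. A fallback is a density strategy: show that, among all sequences $(a_1,\ldots,a_{t-1})\in\mathcal{Y}^{t-1}$ satisfying (a) and (b), a positive proportion also satisfy (c). Making this quantitative would rest on equidistribution statements for divisors of $\tilde B_{t-1}^2$ modulo $\tilde A_{t-1}^2$ as the partial quotients vary, which is where the problem seems to become genuinely hard—presumably the reason the authors leave the statement as a conjecture rather than a theorem.
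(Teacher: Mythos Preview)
Your analysis is correct, and in fact you have arrived at exactly the paper's own position: the statement is labelled as a \emph{Conjecture}, not a theorem, and the paper does not supply a proof. The paper's reasoning is precisely the reduction you describe---by Theorem \ref{prop:tuno}, the full statement would follow from the existence, for every $t\geq 1$ (and $p\neq 3$ when $t=1$), of a nice BCF of length $t$. The authors formulate this existence statement as a separate conjecture and then derive Conjecture \ref{prop:conjprin} conditionally from it.

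Your identification of condition (c) as the real obstruction is also in line with the paper. In Section \ref{sect:particularcases} the authors handle $t=1,2,3,5$ and $t=2^n$ by exhibiting explicit nice sequences, in each case arranging $\tilde B_{t-1}=\pm 1$ so that (c) is vacuous---exactly the simplification you propose. The $t=2^n$ case is indeed done by an inductive doubling construction (Lemma \ref{lem:cala} and Proposition \ref{prop:betanice}), as you anticipated. For general $t$ no such construction is given, and the paper offers only experimental evidence; your assessment that controlling $\tilde B_{t-1}$ uniformly in $t$ is the decisive difficulty matches the authors' implicit view.

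In short, there is no gap to point out because there is no proof to compare against: you have correctly reconstructed the conditional logic and correctly diagnosed why the general case remains open.
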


\subsection{Some particular cases}\label{sect:particularcases}

In this section we are going to prove some cases of Conjecture \ref{prop:conjprin} by providing nice $BCF$ expansions of given length for some particular $t$.  

\subsubsection{The case $t=1$}\label{subsubsect:tdue} This case was proven in \cite[Proposition 2]{Bedocchi1989}. Here $\tilde B_{t-1}=\tilde B_0=1$ and $\frac{A_1}{A_0}=a_0$, so that condition $c)$ in Definition \ref{def:nice1} is always satisfied and, if $p\ge 5$, then every $BCF$ $[a_0]$ such that $|a_0|_p>1$ and $\frac 4 p< |a_0|_\infty <\frac p 4$ is nice. By Theorem \ref{prop:tuno} there exist infinitely many $a_1\in \mathcal{Y}$ such that the periodic $BCF$
$[a_0,\overline{a_1,2a_0}]$ represents a quadratic irrational of the form $\frac 1 {{p^k}\sqrt m}$, with $m\in \ZZ$.\\
\begin{example}
Take $p=5$, $a_0=\frac 6 {5}$. The order of $5$ in $\mathbb{Z}_{36}^\times$ is $6$. By setting $\omega=6,12,18\ldots$ in the proof of Theorem \ref{prop:tuno} we obtain the following $BCF$'s:
\begin{itemize}
    \item $[\frac 6 {5},\overline{ -\frac {2604}{ 3125}, \frac{12}{5}}]=\frac 1 {5\sqrt{-434}}$;
    \item $[\frac 6 {5},\overline{ -\frac {40690104}{ 48828125},\frac{12}{5}}]=\frac 1 {10\sqrt{-1695421}}$;
    \item $[\frac 6 {5},\overline{ -\frac {635782877604}{ 762939453125},\frac{12}{5}}]=\frac 1 {5\sqrt{-105963812934}}$.
\end{itemize}
\end{example}

\subsubsection{The case $t=2$}
In this case $\tilde B_{t-1}=\tilde B_1=\tilde{a}_1$. Notice that, if $a_0\in \mathcal{Y}$ is such that $0< a_0< \frac p 4$, then, for every $h\geq 1$, the sequence $\left [a_0, \frac 1 {p^h} \right ]$ is nice. Indeed for this sequence $\tilde B_1=1$, so $c$) is satisfied, and $\frac {A_1}{A_0}=\frac 1 {p^h} +\frac 1 {a_0} > \frac 4 p$.
Then, Theorem \ref{prop:tuno} allows to find infinitely many $a_2\in \mathcal{Y}$ such that the periodic $BCF$
$\left [a_0 ,\overline{\frac 1 {p^h} ,a_2,\frac 1 {p^h}, 2 a_0} \right ]$ represents   a quadratic irrational of the form $\frac 1 {{p^k}\sqrt m}$, with $m\in \ZZ$.
\begin{example}
Take $p=3$, and consider the $BCF$ $\left[ \frac 1 3,\frac 1 3\right]$. Then $\tilde A_1=10$, and the order of $3$ in $\mathbb{Z}_{100}^\times$ is $20$. By setting $\omega=20,40,60 \ldots$ in the proof of Theorem \ref{prop:tuno} we obtain the following $BCF$'s:
\begin{itemize}
    \item $[\frac 1 3,\overline{ \frac 1 3, -\frac {38742049}{ 129140163}, \frac{1}{3}, \frac{2}{3}}]=\frac 1 {66\sqrt{-72041}}$;
    \item $[\frac 1 3, \overline{ \frac 1 3, -\frac {135085171767299209}{ 450283905890997363} \frac{1}{3}, \frac{2}{3}}]=\frac 1 {66\sqrt{-251191435104482}}$;
    \item $[\frac 1 3, \overline{ \frac 1 3, -\frac {471012869724624483492160369}{ 1570042899082081611640534563} \frac{1}{3}, \frac{2}{3}}]=\frac 1 {66\sqrt{-875850377587111642857323}}$;
\end{itemize}
\medskip
Another suggestive way to obtain examples  is to force $\tilde A_{1}$ to be a prime $\ell$ such that $p$ is a primitive root modulo $\ell^2$.
\end{example}
Fix an odd prime $p\geq 5$. Artin conjecture (see \cite{Hooley}) on primitive roots predicts that the density of primes $\ell$ such that $p$ is a primitive root modulo $\ell$ is $\sim C_p \frac {\pi(x)} x$ for $x\to\infty$, where $C_p$ denotes Artin's constant. Moreover, it is well-known that, if $a$ is a primitive root modulo $\ell$, then exactly $\ell-1$ between the $\ell$ liftings $a+t\ell$ with $t=0,\ldots,\ell-1$ are primitive roots modulo $\ell^2$. 
The above considerations justify to conjecture the existence of an integer $k_1>1$ (depending on $p$) such that there is a  prime $\ell$ in the interval
$$p^{k_1+1}+4p^{k_1-1}
< \ell < \frac 3 2 p^{k_1+1}$$
and $p$ is a primitive root modulo $\ell^2$. Put $\tilde a_1=\ell-p^{k_1+1}$ and $a_1=\frac{\tilde a_1}{p^{k_1}}$. Then $a_1\in\mathcal{Y}$ and we can consider the $BCF$ $[\frac 1 p, a_1]$. We have
$$\left |\frac {A_1}{A_0}\right |_\infty = \frac {|\tilde a_1 +p^{k_1+1}|_\infty }{ p^{k_1}}>\frac 4 p.$$
hence $[\frac 1 p, a_1]$ is nice
and Theorem \ref{prop:tuno} gives infinitely many $a_2\in \mathcal{Y}$ such that the periodic $BCF$
$$\left [\frac 1 p,\overline{a_1,a_2,a_1, \frac 2 p}\right ]$$ represents a quadratic irrational of the form $\frac 1 {{p^k}\sqrt m}$, with $m\in \ZZ$.
\begin{example}
Let $p=3$ and $k_1=4$; then, $3$ is a primitive root modulo $353^2$ and $3^5+4\cdot 3^3< 353 < \frac {3^5} 2$. We put $\tilde a_1=353-3^5=110$, and consider the $BCF$ $\left [\frac 1 3, \frac {110}{81}\right ]$. The discrete logarithm of $110$ in base $3$ modulo $353^2$ is $\omega_0=31861$, and the multiplicative order of $3$ modulo $353^2$ is $s=124256$. Following the proof of Theorem \ref{prop:tuno} we construct
\begin{align*}
    b &= \frac{3^{31861}-110}{353^2},\\
    k_2 &= 31861 -9=31852,\\
    \tilde c &= \frac {-3^{31856} -1}{353},\\
    \tilde a_2 &=2\tilde c,\\
    a_2 &= \frac{\tilde a_2} {3^{31852}}.
\end{align*} By setting $\omega=\omega_0+hs$ with $h\in\mathbb{N}$ we obtain infinitely many other examples.
\end{example}
\subsubsection{The case $t=3$}
\begin{proposition}\label{prop:tre} The $BCF$
$\left [\frac{1}{p}, \frac{1-p}{p}, \frac{1+p}{p}\right ]$ is nice.  Therefore there are infinitely many integers $b$ such that the BCF expansion of $\sqrt{b}$ has period $6$.
\end{proposition}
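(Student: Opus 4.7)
The plan is to verify the three conditions of Definition \ref{def:nice1} for the prescribed BCF by direct computation, and then invoke Theorem \ref{prop:tuno} to extract the consequence for square roots.

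First I would run the BCF recurrences to obtain explicit formulas for $A_0,A_1,A_2$ and $B_0,B_1,B_2$; the computation is very short because each partial quotient has denominator exactly $p$. A routine calculation gives $A_1=(p^2-p+1)/p^2$, $B_1=(1-p)/p$, $A_2=(p^3+p^2+1)/p^3$ and, most importantly, $B_2=1/p^2$. In particular $\tilde B_2=1$, which immediately disposes of condition (c): the only integer $q$ with $\tilde B_2\mid q\mid \tilde B_2^2$ is $q=1=p^0$, and $1$ lies in every multiplicative subgroup generated by the class of $p$ modulo $\tilde A_2^2$.

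Condition (a) is immediate since $|1/p|_p=p>1$ and $|1/p|_\infty=1/p<p/4$ for every odd prime $p$. For condition (b), the required inequality $|A_2/A_1|_\infty>4/p$ is equivalent to $(p^3+p^2+1)/(p^2-p+1)>4$. Polynomial long division gives quotient $p+2$ and positive remainder $p-1$, so the ratio is at least $p+2\geq 5$ for every prime $p\geq 3$, and (b) follows.

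With niceness in hand, Theorem \ref{prop:tuno} produces infinitely many fourth partial quotients $a_3\in\mathcal{Y}$ for which the period-$6$ BCF
\[
[1/p,\overline{(1-p)/p,(1+p)/p,a_3,(1+p)/p,(1-p)/p,2/p}]
\]
equals $1/(p\sqrt{m})$ for some non-square integer $m$. The construction in the proof of Theorem \ref{prop:tuno} lets the exponent $\omega$ range over an arithmetic progression; since the associated quantity $b=(p^\omega-q)/\tilde A_{t-1}^2$ (and hence $m=-b q_1$) takes infinitely many values, one obtains infinitely many distinct $m$'s. For each, the integer $b=p^2 m$ satisfies $\sqrt b = p\sqrt{m}\in \QQ_p$ with BCF of period $2t=6$, which yields the stated conclusion. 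There is no substantive obstacle: the only delicate input is condition (c), but the fortunate identity $\tilde B_2=1$ collapses it entirely.
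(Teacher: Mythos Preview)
Your proof is correct and follows essentially the same approach as the paper: compute the convergents explicitly, observe that $\tilde B_2=1$ so condition (c) is automatic, check (a) trivially, and verify (b) by showing $(p^3+p^2+1)/(p^2-p+1)>4$. Your long-division argument for (b) is in fact cleaner than the paper's, which analyzes the polynomial $x^3-3x^2+4x-3$ and claims a positive value at $x=1$ (it is actually $-1$, though the claim still holds for $p\ge 3$); your estimate $p+2\ge 5$ avoids this slip. One small notational quibble: you reuse the letter $b$ at the end for the integer $p^2m$, clashing with the $b$ from the proof of Theorem~\ref{prop:tuno}; consider renaming one of them.
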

\begin{proof}
 We have $B_2=\frac {1-p^{2}} {p^{2}} +1$ so that $\tilde B_2=1$. Moreover $\frac{A_2}{A_1}=\frac {p^{3}+p^{2}+1}{p(p^{2}-{p}+1)}> \frac 4 p$; indeed the polynomial function
 $$x^3-3x^2+4x-3$$ is increasing and has a positive value in $x=1$.\\
\end{proof}
More generally, it is possible to show that for $p\geq 5$ and every integer $h\geq 1$ the $BCF$
$\left [\frac{1}{p^h}, \frac{1-p^h}{p^h}, \frac{1+p^h}{p^h}\right ]$ is nice. Then, Theorem \ref{prop:tuno} implies that for every $h$ there are infinitely many integers $m$ coprime with $p$ such that the $BCF$ expansion for $p^h\sqrt{m}$ has the form
\[
\left [0, \overline{ \frac{1}{p^h}, \frac{1-p^h}{p^h}, \frac{1+p^h}{p^h}, a_4, \frac{1+p^h}{p^h}, \frac{1-p^h}{p^h}, \frac{2}{p^h} } \right ];
\]
in particular, it has period $6$.

\subsubsection{The case $t=5$}
\begin{proposition}\label{prop:cinque} The $BCF$
$\left [\frac{1}{p}, -\frac{p^3-1}{2p^2},\frac{1}{p}, -\frac 2{p^2}, \frac{1}{p}\right ]$ is nice.  Therefore there are infinitely many integers $b$ such that the $BCF$ expansion of $\sqrt{b}$ has period $10$.
\end{proposition}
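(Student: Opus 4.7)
The plan is to verify the three conditions of Definition~\ref{def:nice1} for the finite $BCF$ $\left[\frac 1 p, -\frac{p^3-1}{2p^2}, \frac 1 p, -\frac 2{p^2}, \frac 1 p\right]$ (so $t=5$) by a direct computation of its convergents $A_i/B_i$ up to $i=4$, and then invoke Theorem~\ref{prop:tuno} to conclude. A preliminary sanity check is that the listed partial quotients really lie in $\mathcal{Y}$: since $p$ is odd, $2\mid p^3-1$, so all of them belong to $\ZZ[1/p]$, and the inequalities $(p^3-1)/(2p^2)<p/2$ and $2/p^2<p/2$ are immediate for $p\geq 3$.

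Condition $a)$ is trivial: $a_0=1/p$ satisfies $|a_0|_p=p>1$ and $|a_0|_\infty=1/p<p/4$. For condition $b)$, expanding $A_n=a_nA_{n-1}+A_{n-2}$ with $A_{-1}=1$ yields
\begin{equation*}
A_0 = \tfrac 1 p,\quad A_1 = \tfrac{p^3+1}{2p^3},\quad A_2 = \tfrac{3p^3+1}{2p^4},\quad A_3 = \tfrac{p^6-5p^3-2}{2p^6},\quad A_4 = \tfrac{2p^6-2p^3-1}{p^7}.
\end{equation*}
Since $p^6-5p^3-2>0$ for $p\geq 3$, one finds $|A_4/A_3|_\infty=\frac{2(2p^6-2p^3-1)}{p(p^6-5p^3-2)}$, and the inequality $|A_4/A_3|_\infty>4/p$ reduces to $16p^3+6>0$, which is evident.

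The heart of the argument is condition $c)$. Running the parallel recurrence for $B_n$ with $B_{-1}=0$, $B_0=1$ gives
\begin{equation*}
B_0 = 1,\quad B_1 = -\tfrac{p^3-1}{2p^2},\quad B_2 = \tfrac{p^3+1}{2p^3},\quad B_3 = -\tfrac{p^6+p^3+2}{2p^5},\quad B_4 = -\tfrac 1{p^6},
\end{equation*}
the last step being the key cancellation $-(p^6+p^3+2)+p^3(p^3+1)=-2$. This collapse --- clearly the reason the partial quotients were chosen in this particular shape --- gives $\tilde B_4=-1$, so $\tilde B_4^2=1$. Thus one may take $q=1$, which belongs to the subgroup generated by $p$ modulo $\tilde A_4^2$ trivially (note $\tilde A_4=2p^6-2p^3-1\equiv -1\pmod p$, so $p$ is a unit modulo $\tilde A_4^2$ and has a well-defined multiplicative order there). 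Hence condition $c)$ holds and the $BCF$ is nice.

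Applying Theorem~\ref{prop:tuno} with this nice sequence produces infinitely many $a_5\in\mathcal{Y}$ such that the periodic $BCF$
\begin{equation*}
\left[\tfrac 1 p,\overline{-\tfrac{p^3-1}{2p^2},\tfrac 1 p,-\tfrac 2{p^2},\tfrac 1 p,a_5,\tfrac 1 p,-\tfrac 2{p^2},\tfrac 1 p,-\tfrac{p^3-1}{2p^2},\tfrac 2 p}\right]
\end{equation*}
converges to a quadratic irrational of the form $1/(p^k\sqrt m)$ with $k\geq 1$ and $m\in\ZZ$ coprime to $p$ and not a square; equivalently, $\sqrt{p^{2k}m}=p^k\sqrt m$ has a $BCF$ expansion of period $2t=10$, yielding infinitely many integers $b=p^{2k}m$ with $\sqrt b$ of period $10$ as required. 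The only delicate point in the whole argument is the cancellation giving $B_4=-1/p^6$; everything else is either trivial or a routine application of the recurrences.
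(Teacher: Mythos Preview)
Your proof is correct and follows essentially the same approach as the paper: compute the convergents explicitly, observe the key cancellation $B_4=-1/p^6$ so that $\tilde B_4=-1$ (making condition~$c)$ trivial with $q=1$), and check the ratio $|A_4/A_3|_\infty>4/p$. Your version simply spells out the intermediate $A_i$, $B_i$ and the sanity checks that the partial quotients lie in $\mathcal{Y}$, whereas the paper records only the final values $B_4=-1/p^6$ and $A_4/A_3=\frac{4p^6-4p^3-2}{p(p^6-5p^3-2)}$.
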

\begin{proof}
 We have $B_4=-\frac {1} {p^{6}} $ so that $\tilde B_4=-1$, and $\frac{A_4}{A_3}=\frac {4p^{6}-4p^{3}-2}{p(p^{6}-5{p^3}-2)}> \frac 4 p$.
\end{proof}

\subsubsection{The case $t=2^n$}
This section is devoted to exhibit, for every $n,k\geq 1$, a nice sequence $\beta_n^k$ of length $2^n$ having all partial quotients with denominator $p^k$. Notice that the case $n=0$ has been dealt with in Section \ref{subsubsect:tdue}. \\
In what follows, it will be sometimes useful to write $\tilde B_r(a_0,\ldots, a_n)$ ($n\geq r$), in order to put in evidence the dependence on partial quotients.
\begin{lemma}\label{lem:cala}
Consider a $BCF$ of the form
$$\alpha=\left [\frac {\tilde a_0} {p^k},\frac {1} {p^k},\frac {\tilde a_2} {p^k}, -\frac {1} {p^k}\ldots, \frac {\tilde a_{2j}} {p^k}, (-1)^j \frac {1} {p^k},\ldots \right],$$
 and define
$$\alpha^\bullet=\left [\frac {\tilde a_0} {p^{2k}},- \frac {\tilde a_2} {p^{2k}}, \frac {\tilde a_4} {p^{2k}}\ldots,(-1)^j\frac {\tilde a_{2j}} {p^{2k}},\ldots \right].$$
Put $B^\bullet_j=B_j(\alpha^\bullet)$. Then, for every $i\in \mathbb{N}$,
\begin{itemize}
    \item[a)] $B_{2i}=(-1)^i(B^\bullet_i-B^\bullet_{i-1})$;
    \item[b)] $B_{2i+1}=\frac 1 {p^k} B_i^\bullet$;
    \item[c)] $A_{2i+1}=A_i^\bullet+B_i^\bullet$;
    \item[d)] $A_{2i}= (-1)^ip^k(A_i^\bullet-A_{i-1}^\bullet+B_{i}^\bullet-B_{i-1}^\bullet) $.
\end{itemize}
\end{lemma}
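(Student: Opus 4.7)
The plan is to prove all four identities a)--d) simultaneously by induction on $i \geq 0$. Only the three-term recurrences for the two convergent sequences, together with the explicit partial quotients
$$a_{2j}=\frac{\tilde a_{2j}}{p^k},\quad a_{2j+1}=\frac{(-1)^j}{p^k},\quad a^\bullet_j=(-1)^j\frac{\tilde a_{2j}}{p^{2k}},$$
will be used.

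For the base case $i=0$, I would compute directly: on the $\alpha$ side $B_0=1$, $B_1=a_1=1/p^k$, $A_0=\tilde a_0/p^k$, $A_1=\tilde a_0/p^{2k}+1$; on the $\alpha^\bullet$ side $B^\bullet_{-1}=0$, $B^\bullet_0=1$, $A^\bullet_{-1}=1$, $A^\bullet_0=\tilde a_0/p^{2k}$. Plugging in gives all four identities at $i=0$ on the nose.

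For the inductive step, assume a)--d) at index $i$ and prove them at $i+1$ in the order a), b), d), c). To get a) at $i+1$, expand $B_{2i+2}=(\tilde a_{2i+2}/p^k)B_{2i+1}+B_{2i}$ using b) and a) at $i$, and compare with $(-1)^{i+1}(B^\bullet_{i+1}-B^\bullet_i)$ after substituting $B^\bullet_{i+1}=(-1)^{i+1}(\tilde a_{2i+2}/p^{2k})B^\bullet_i+B^\bullet_{i-1}$; the signs collapse and the two sides agree. Then b) at $i+1$ follows from $B_{2i+3}=((-1)^{i+1}/p^k)B_{2i+2}+B_{2i+1}$ by inserting a) at $i+1$ (just proved) and b) at $i$; the factor $(-1)^{i+1}\cdot(-1)^{i+1}$ collapses to give $\frac{1}{p^k}B^\bullet_{i+1}$. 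The identities d) at $i+1$ and c) at $i+1$ are extracted from $A_{2i+2}$ and $A_{2i+3}$ by exactly the same pattern: expand via the $\alpha$-recurrence, substitute c) and d) at $i$, and recognise the result after substituting the $\alpha^\bullet$-recurrence for $A^\bullet_{i+1}$ (and $B^\bullet_{i+1}$, which appears through d)). The structural symmetry between the $A$- and $B$-recurrences makes the $A$-side computations formally identical to the $B$-side ones.

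The main obstacle, honestly, is purely clerical: tracking the alternating signs $(-1)^i$ and $(-1)^{i+1}$ correctly through each expansion, since the parities of the indices on the $\alpha$ side ($2i$ vs $2i+1$) and the $\alpha^\bullet$ side ($i$) interact nontrivially. No deeper idea seems necessary; in particular, one could alternatively package everything matricially by seeking $P, Q$ with $\mathcal{B}_{2i+1}=P\,\mathcal{B}^\bullet_i\, Q_i$, but since verifying such an identity is equivalent to checking exactly our four formulas, it does not genuinely shorten the argument.
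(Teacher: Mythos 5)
Your proposal is correct and follows essentially the same route as the paper: simultaneous induction on $i$, the same explicit base case at $i=0$, and the same substitutions of the three-term recurrences for $\alpha$ and $\alpha^\bullet$ in the inductive step (your ordering a), b), d), c) is in fact the logically cleaner one, since the paper's computation of $A_{2i+3}$ already uses the formula for $A_{2i+2}$ at level $i+1$). Nothing further is needed.
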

\begin{proof}
 We prove the Lemma by induction on $i$. The claim is certainly true for $i=0$; indeed, 
 $$
 \begin{array}{llll}  A_0= \frac{\tilde a_0} {p^k}, \quad  & A_1= \frac{\tilde a_0}{p^{2k}}+1, \quad &
  B_0= 1,\quad  & B_1= \frac{1}{p^{k}} \\ \ &&&
  \\
 A_{-1}^\bullet =1, & A_0^\bullet =\frac {\tilde a_0} {p^{2k}} ,
 &B_{-1}^\bullet =0,\quad & B_0^\bullet =1.
 \end{array}
 $$
 Assume now by inductive hypothesis that the thesis holds true for $i$.  Then,
  \begin{align*} 
  B_{2i+2} & = \frac {\tilde a_{2i+2}} {p^k} B_{2i+1}+B_{2i}; \quad\hbox{by inductive hypothesis}\\
 & = \frac {\tilde a_{2i+2}} {p^{2k}} B_{i}^\bullet+(-1)^i(B^\bullet_i-B^\bullet_{i-1})\\
 &= (-1)^{i+1}\left ( (-1)^{i+1}\frac  {\tilde a_{2i+2}} {p^{2k}} B_{i}^\bullet+ B_{i-1}^\bullet-B_i^\bullet\right )\\
 &= (-1)^{i+1}(B_{i+1}^\bullet-B_i^\bullet); \\
 B_{2i+3} &= \frac {(-1)^{i+1}}{p^k}B_{2i+2}+B_{2i+1}; \quad\hbox{by inductive hypothesis}\\
 &= \frac 1 {p^k}\left (B_{i+1}^\bullet-B_i^\bullet+B^\bullet_i\right )\\
 &= \frac 1 {p^k}B_{i+1}^\bullet;\\
 A_{2i+3} &= \frac {(-1)^{i+1}}{p^k}A_{2i+2}+A_{2i+1}; \quad\hbox{by  inductive hypothesis}\\
 &= \frac {(-1)^{i+1}}{p^k}((-1)^{i+1}p^k(A_{i+1}^\bullet-A_i^\bullet+B_{i+1}^\bullet-B_i^\bullet)) +A_i^\bullet+B_i^\bullet\\
 &= A_{i+1}^\bullet+B_{i+1}^\bullet; \\
A_{2i+2} & = \frac {\tilde a_{2i+2}} {p^k} A_{2i+1}+A_{2i}; \quad\hbox{by inductive hypothesis}\\
 & = \frac {\tilde a_{2i+2}} {p^{k}}(A_i^\bullet+B_i^\bullet) +(-1)^ip^k(A_i^\bullet-A_{i-1}^\bullet+B_{i}^\bullet-B_{i-1}^\bullet)\\
 &= (-1)^{i+1}p^k\left ( (-1)^{i+1}\frac  {\tilde a_{2i+2}} {p^{2k}} A_{i}^\bullet+ A_{i-1}^\bullet\right )+(-1)^{i+1}p^k\left ((-1)^{i+1}\frac  {\tilde a_{2i+2}} {p^{2k}} B_{i}^\bullet+ B_{i-1}^\bullet\right )+\\
  &\quad + (-1)^ip^k(A_i^\bullet+B_i^\bullet)\\
 &= (-1)^{i+1}p^k(A_{i+1}^\bullet+B_{i+1}^\bullet-A_i^\bullet-B_i^\bullet). 
 \end{align*}
\end{proof}
For $n,k\geq 1$  we can construct inductively a $BCF$
$\beta_n^k$ of length $2^n$ as follows
\begin{itemize} 
\item $\beta_1^k=\left [\frac 1 {p^k}, \frac 1 {p^k}\right ]; $ 
\item if $\beta_n^k=[b_0,\ldots, b_{2^n-1}] $, then
$$\beta_{n+1}^k=\left [b_0,\frac 1 {p^k}, - b_1,-\frac 1 {p^k}, b_2,\frac 1 {p^k}, \ldots, (-1)^ib_i,(-1)^i \frac 1 {p^k},\ldots,  b_{2^n-1}, -\frac 1 {p^k}\right ]$$
\end{itemize}

\begin{proposition}\label{prop:betanice} For every $n,k\geq 1$, $\beta_n^k$ is nice, and $\tilde B_{2^n-1}(\beta_n^k)=1$.
\end{proposition}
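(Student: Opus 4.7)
The plan is to proceed by induction on $n$, with the inductive hypothesis stated uniformly in $k$: for every $k\ge 1$, $\beta_n^k$ is nice and $\tilde B_{2^n-1}(\beta_n^k)=1$. The base case $n=1$, $\beta_1^k=[1/p^k,1/p^k]$, reduces to a direct computation giving $A_0=1/p^k$, $A_1=1+1/p^{2k}$, $B_1=1/p^k$ and hence $\tilde B_1=1$ (so condition (c) of Definition \ref{def:nice1} is trivial with $q=1$), $|a_0|_p=p^k>1$, $|a_0|_\infty=1/p^k<p/4$ (condition (a)), and $|A_1/A_0|_\infty=p^k+1/p^k>4/p$ (condition (b), valid for every $p\ge 3$).

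For the inductive step, the first observation is that $\beta_{n+1}^k$ has exactly the shape required by Lemma \ref{lem:cala}, with the contracted $BCF$ $\alpha^\bullet$ produced by the lemma being precisely $\beta_n^{2k}$. Indeed, unwinding the recursive definition, the odd-indexed partial quotients of $\beta_{n+1}^k$ are $(-1)^i/p^k$ as required by the lemma; the even-indexed ones, being $(-1)^i$ times the $i$-th partial quotient of $\beta_n^k$, contribute sign factors that cancel under the transformation $\alpha\mapsto\alpha^\bullet$, so that the partial quotients of $\alpha^\bullet$ coincide with those of $\beta_n^{2k}$.

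Applying the inductive hypothesis to $\beta_n^{2k}$, part (b) of Lemma \ref{lem:cala} at $i=2^n-1$ yields $B_{2^{n+1}-1}(\beta_{n+1}^k)=(1/p^k)B^\bullet_{2^n-1}$; combining this with $\tilde B^\bullet_{2^n-1}=1$ and the fact that $k_j=k$ for every $j$ in $\beta_{n+1}^k$, a short valuation count gives $\tilde B_{2^{n+1}-1}(\beta_{n+1}^k)=1$. This settles condition (c) of Definition \ref{def:nice1} with $q=1$, and condition (a) is immediate since $a_0=1/p^k$.

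The main obstacle is condition (b), namely $|A_{2^{n+1}-1}/A_{2^{n+1}-2}|_\infty>4/p$. Parts (c) and (d) of the lemma at $i=2^n-1$ give
$$A_{2^{n+1}-1}=A^\bullet_{2^n-1}+B^\bullet_{2^n-1},\quad A_{2^{n+1}-2}=-p^k\bigl(A^\bullet_{2^n-1}-A^\bullet_{2^n-2}+B^\bullet_{2^n-1}-B^\bullet_{2^n-2}\bigr),$$
and multiplying by $p^{(2^{n+1}-1)k}$ (using $\tilde B^\bullet_{2^n-1}=1$) produces polynomial identities of the form $\tilde A_{2^{n+1}-1}(\beta_{n+1}^k)=\tilde A^\bullet_{2^n-1}+p^{2k}$ and $\tilde A_{2^{n+1}-2}(\beta_{n+1}^k)=-\tilde A^\bullet_{2^n-1}+\tilde A^\bullet_{2^n-2}p^{2k}-p^{2k}+\tilde B^\bullet_{2^n-2}p^{4k}$. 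To close the induction I would strengthen the inductive statement by also tracking the leading monomial in $p^k$ of $\tilde A_{2^n-1}(\beta_n^k)$ (inductively shown to be $p^{2^n k}$ with coefficient $+1$) and of $\tilde B_{2^n-2}(\beta_n^k)$ (inductively $p^{(2^n-2)k}$ with coefficient $+1$). The critical point—already visible in the small cases—is that these leading terms force the top monomial $p^{2^{n+1}k}$ in $\tilde A_{2^{n+1}-2}$ to cancel exactly between $-\tilde A^\bullet_{2^n-1}$ and $\tilde B^\bullet_{2^n-2}p^{4k}$, lowering the $p^k$-degree of $\tilde A_{2^{n+1}-2}$ strictly below $2^{n+1}$, whereas $\tilde A_{2^{n+1}-1}$ retains leading monomial $p^{2^{n+1}k}$. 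The resulting comfortable degree gap yields condition (b) with room to spare for every $p\ge 3,\,k\ge 1$, and propagates the strengthened inductive hypothesis. This polynomial bookkeeping is the delicate technical core of the proof.
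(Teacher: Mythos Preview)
Your overall strategy coincides with the paper's: both arguments hinge on Lemma \ref{lem:cala} and the observation that $(\beta_{n+1}^k)^\bullet=\beta_n^{2k}$, which you identify correctly. The paper, however, does not try to run a bare induction on $n$; instead it uses the recursion coming from Lemma \ref{lem:cala} to write down \emph{closed-form polynomials} $\tilde U,\tilde V,\tilde S,W$ in a variable $X$ such that $(A_{2^n-1},B_{2^n-1},A_{2^n-2},B_{2^n-2})(\beta_n^k)$ equals $(U,W,S,V)(n,p^k)$. From $\tilde U(n,X)=1+\sum_{j=1}^n X^{2^j}$ and the recursion $\tilde S(n+1,X)=X^2\tilde S(n,X^2)-1-X^2-2\sum_{j=2}^n X^{2^j}$ one reads off immediately that $\tilde B_{2^n-1}=1$, that $|\tilde A_{2^n-1}|>p^{2^nk}$, and that $|\tilde A_{2^n-2}|<2p^{3\cdot 2^{n-2}k}$, giving condition (b) with a degree gap of order $2^{n-2}$.

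Your proposed strengthening---tracking only the leading monomials of $\tilde A_{2^n-1}$ and $\tilde B_{2^n-2}$---is where the gap lies. Those two leading terms do suffice to exhibit the cancellation of the top monomial $p^{2^{n+1}k}$ in $\tilde A_{2^{n+1}-2}$, as you say. But once that term is gone you still need an \emph{upper bound} on $|\tilde A_{2^{n+1}-2}|$, and for that you must control (i) the subleading terms of $\tilde A^\bullet_{2^n-1}$ and $\tilde B^\bullet_{2^n-2}$, and (ii) the size of $\tilde A^\bullet_{2^n-2}$ itself. None of these are part of your inductive hypothesis, and without them the coefficients could in principle grow with $n$ and swamp the degree gap. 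Concretely, to propagate a usable bound on $|\tilde A_{2^n-2}|$ you end up needing essentially the full shape of $\tilde S(n,X)$ (degree $3\cdot 2^{n-2}-2$, coefficients in $\{-1,-2\}$), which is exactly the information the paper's explicit formulas encode. So your plan is viable, but the ``polynomial bookkeeping'' you flag as the technical core cannot be done with two leading monomials alone; carrying it out forces you to rediscover the closed forms the paper writes down.
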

\begin{proof}
 Consider the following system of recurrence relations:
\begin{equation}\label{eq:sistema} \left\{ \begin{array}{ll}
x(n,k)&=x(n-1,2k)+y(n-1,2k),\\ y(n,k)& = \frac 1 {p^k}y(n-1,2k),\\
 z(n,k)&=-p^k(x(n-1,2k)-z(n-1,2k)+y(n-1,2k)-w(n-1,2k)),\\
    w(n,k) & = -z(n-1,2k)+w(n-1,2k). \end{array}\right.\end{equation}

By Lemma \ref{lem:cala} the quadruple $(A_{2^{n}-1}(\beta_n^k),B_{2^{n}-1}(\beta_n^k),A_{2^{n}-2}(\beta_n^k),B_{2^{n}-2}(\beta_n^k))$ is a solution of   \eqref{eq:sistema}, subjected to the initial conditions:
\begin{equation}\label{eq:condizioniiniziali} x(1,k)=\frac 1 {p^{2k}}+1, \quad  y(1,k)= \frac 1 {p^k},\quad  z(1,k)=\frac 1 {p^k},\quad  w(1,k)= 1. \end{equation}
In order to determine explicit formulas for $A_{2^{n}-1}(\beta_n^k)$, $B_{2^{n}-1}(\beta_n^k)$,$A_{2^{n}-2}(\beta_n^k)$, $B_{2^{n}-2}(\beta_n^k)$,
we introduce the following polynomials, for $n\geq 1$, 
\begin{align}
   \tilde U(n,X) &= \sum_{j=1}^n X^{2^j}+1; \nonumber \\
   \tilde V(n,X) &= X^{2(2^{n-1}-1)}-\sum_{j=0}^{n-2}X^{2(2^j-1)}; \nonumber \\
   \tilde S(1,X)=1; &\quad   \tilde S(n+1,X)=X^2\tilde S(n, X^2)-1-X^2-2\sum_{j=2}^n X^{2^j}, \label{eq:Stilde}
\end{align}
and define the sequences of rational functions
\begin{align*}
  U(n,X) = \frac{\tilde U(n,X)} {X^{2^n}}, \qquad 
  & V(n,X) = \frac{\tilde V(n,X)} {X^{2^n-2}}, \nonumber \\
   S(n,X) = \frac{\tilde S(n,X)} {X^{2^n-1}}, \qquad  &
   W(n,X) = \frac{1}{X^{2^n-1}}. 
\end{align*}
Then, the quadruple  $(S(n,p^k),U(n,p^k),V(n,p^k),W(n,p^k))$ is a solution of system \eqref{eq:sistema} subjected to she same initial conditions \eqref{eq:condizioniiniziali}, so that we find 
\begin{align*} 
  S(n,p^k)= A_{2^n-2}(\beta_n^k), \quad\quad  & U(n,p^k)= A_{2^n-1}(\beta_n^k), \nonumber\\
  V(n,p^k)= B_{2^n-2}(\beta_n^k),\quad\quad
  & W(n,p^k)= B_{2^n-1}(\beta_n^k).\label{eq:SUVWAB}
\end{align*}
First, notice that 
$B_{2^n-1}(\beta_{n}^k) = W(n,p^k) =\frac 1 {p^{k(2^n-1)}}$, hence $\tilde B_{2^n-1}(\beta_{n}^k)=1$.

 By \eqref{eq:Stilde},  $\tilde{S}(1,X)=1$ and $\tilde{S}(2,X)=-1$; moreover, using an induction argument, for every $n\ge 3$ we have that $\tilde S(n,X)$ is a polynomial of degree $3\cdot 2^{n-2}-2$ of the form
 $$ \tilde S(n,X)=-(1+2\sum X^{2i_j}),$$
 for some indexes $1\le i_j\le 3\cdot 2^{n-2}-2$. 
 Using this property we deduce $|\tilde S(1, p^k)|=|\tilde S(2, p^k)|=1$ and, for $n\ge 3$, 
 \begin{equation}\label{eq:maggiorazione1} 
  |\tilde S(n, p^k)| \leq 2\left |\sum_{j=0}^{3\cdot 2^{n-3}-1} p^{2jk}\right |\leq  2\left |\frac {p^{3\cdot 2^{n-2}k}-1} {p^{2k}-1}\right |< 2p^{3\cdot 2^{n-2}k}. 
 \end{equation}
 Furthermore, for $n \ge 1$, we have
 \begin{equation} \label{eq:maggiorazione2} 
 |\tilde U(n,p^k)| > p^{2^nk}. 
 \end{equation}
\medskip
\noindent To prove niceness it remains to verify condition $b$) in Definition \ref{def:nice1}. We have
\begin{equation*}
\frac {A_{2^{n}-1}(\beta_n^k)}{A_{2^{n}-2}(\beta_n^k)} = \frac {U(n,p^k)}{S(n,p^k)}=
\frac {\tilde U(n,p^k)}{p^k \tilde S(n,p^k)}.
\end{equation*}
For $n=1$ we have
$$\left | \frac {A_{1}(\beta_1^k)}{A_{
{0}}(\beta_1^k)}\right | =\left | \frac {\tilde U(1,p^k)}{p^k\tilde S(1,p^k)}\right |=\left |\frac {p^{2k}+1}{p^k}\right |> p^k > \frac 4 p, $$
and for $n=2$, 
$$\left | \frac {A_{3}(\beta_1^k)}{A_{
{2}}(\beta_1^k)}\right | =\left | \frac {\tilde U(2,p^k)}{p^k\tilde S(2,p^k)}\right |=\left |\frac {p^{4k}+p^{2k}+1}{p^k}\right |> p^{3k} 
> \frac 4 p.$$
Using \eqref{eq:maggiorazione1} and \eqref{eq:maggiorazione2}, for $n\geq 3 $ we finally have
\begin{equation*}
\left | \frac {A_{2^{n}-1}(\beta_n^k)}{A_{2^{n}-2}(\beta_n^k)}\right | =
\left |\frac {\tilde U(n,p^k)}{p^k \tilde S(n,p^k)}\right |> \left | \frac {p^{2^nk}}{2{p^{3\cdot 2^{n-2}k}} } \right | = \frac 1 2  p^{k(2^n-3\cdot 2^{n-2})}= \frac 1 2 p^{2^{n-2} k} >\frac 4 p,
\end{equation*}
proving the claim.
\end{proof}

\begin{remark}
It turns out that the sequence $\beta_n^k$ represents the $BCF$ expansion of the rational number
$$\frac {U(n,p^k)}{W(n,p^k)}= \frac {1+p^{2k}+p^{4k}+\ldots + p^{2^n k}}{p^k}.$$
We also point out that $\beta_n^k$ can be defined by a \lq\lq folding\rq\rq\  procedure, as described in  \cite[Theorem 1]{PooSh}.
\end{remark}

Finally, Theorem \ref{prop:tuno} and Proposition \ref{prop:betanice} implies the following result:
\begin{theorem}\label{teo:dueenne} 
For every $n,k \geq 1$ there are infinitely many integers $m$ coprime to $p$ such that the $BCF$ expansion for $p^k\sqrt{m}$ has period $2^n$. \end{theorem}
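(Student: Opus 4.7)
The plan is a direct combination of the two preceding results. By Theorem~\ref{prop:tuno}, any nice $BCF$ of length $t$ with initial partial quotient of $p$-adic valuation $-k$ produces infinitely many non-square $m\in\ZZ\setminus p\ZZ$ for which $\tfrac{1}{p^k\sqrt m}$ admits a periodic $BCF$ whose repeating block has length $2t$. To realise period $2^n$ with the prescribed $k$, it therefore suffices to exhibit a nice $BCF$ of length $2^{n-1}$ all of whose partial quotients have denominator $p^k$.

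For every $n\geq 2$, Proposition~\ref{prop:betanice} supplies exactly such a sequence, namely $\beta_{n-1}^k$, of length $2^{n-1}$ and with first partial quotient $1/p^k$ (in particular $k_0=k$). Plugging $t=2^{n-1}$ into Theorem~\ref{prop:tuno} then yields infinitely many non-square integers $m\in\ZZ\setminus p\ZZ$ such that
\[
\tfrac{1}{p^k\sqrt m}=\bigl[a_0,\,\overline{a_1,\ldots,a_{2^{n-1}-1},\,a_{2^{n-1}},\,a_{2^{n-1}-1},\ldots,a_1,\,2a_0}\bigr],
\]
whose period equals $2\cdot 2^{n-1}=2^n$. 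Since $v_p(p^k\sqrt m)=k>0$, inverting this relation prepends a single $0$ to the expansion and gives the $BCF$ $[0,a_0,\overline{a_1,\ldots,2a_0}]$ for $p^k\sqrt m$, still of period $2^n$. Infinitely many distinct values of $m$ arise by letting the free exponent $\omega$ of Theorem~\ref{prop:tuno} range over the arithmetic progression $\omega_0+s\ZZ_{\geq 0}$, where $s$ is the multiplicative order of $p$ modulo $\tilde A_{2^{n-1}-1}^2$; inspection of the formula $m=-bq_1$ (with $b=(p^\omega-q)/\tilde A_{t-1}^2$) shows that $|m|_\infty\to\infty$ as $\omega\to\infty$, so only finitely many choices of $\omega$ can share the same $m$.

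The remaining case $n=1$ corresponds to period $2$ and is precisely the content of Section~\ref{subsubsect:tdue}, originally established by Bedocchi in \cite[Proposition~2]{Bedocchi1989}. Since all of the technical work — both the inductive niceness verification for $\beta_{n-1}^k$ via the auxiliary polynomials $\tilde U$ and $\tilde S$ in Proposition~\ref{prop:betanice}, and the construction of the completing partial quotient $a_{2^{n-1}}$ in Theorem~\ref{prop:tuno} — has already been dispatched, there is no substantial remaining obstacle and the theorem follows at once.
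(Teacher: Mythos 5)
Your argument is exactly the paper's: Theorem~\ref{teo:dueenne} is stated there as an immediate consequence of Theorem~\ref{prop:tuno} applied to the nice sequence $\beta_{n-1}^k$ of length $2^{n-1}$ from Proposition~\ref{prop:betanice} (giving period $2\cdot 2^{n-1}=2^n$ after prepending the $0$), with the residual case $n=1$ referred back to the $t=1$ construction of Section~\ref{subsubsect:tdue}. Your added remarks on why distinct $\omega$ yield infinitely many distinct $m$ are a welcome, if routine, elaboration of what the paper leaves implicit.
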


\subsection{Comparison with Ruban expansion of square roots}

We conclude this section by making a parallelism with the periodicity of numbers with trace $0$ in the case of Ruban. In \cite[Corollary 6.10]{CVZ2019}, the authors proved that a certain class of numbers of the form $\alpha=\sqrt{1+kp^h}$, with $(k,p)=1$ cannot have periodic $RCF$ expansion. Actually, up to now it is not know any example of a number of the form $\sqrt{\Delta}$, with $(\Delta, p)=1$ with periodic Ruban continued fraction expansion.
We continue this investigation in the following proposition. 

\begin{proposition} \label{prop:periodicity_Ruban} \
\begin{itemize}
    \item There exist infinitely many $\alpha$ with tr$(\alpha)=0$ and $v_p(\alpha)<0$ having periodic RCF;
    \item If $\alpha=p^k\sqrt{m}$ for some $m\in \ZZ \setminus p\ZZ$ and $k>0$, then the RCF expansion is not periodic.
\end{itemize}
\end{proposition}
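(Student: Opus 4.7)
The plan is to treat the two claims separately: an explicit construction for the first, a sign argument in the real embedding for the second.

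For the first claim, I build an infinite family as follows. Fix $a_0,a_1\in\ZZ[1/p]\cap(0,p)$ with $v_p(a_0)<0$, $v_p(a_1)<0$, and with the prime-to-$p$ part $\tilde a_1$ of $a_1$ even; such pairs are plentiful, for instance $a_0=1/p$ and $a_1=2/p^{k_1}$ for any $k_1\geq 1$. Let $\beta\in\QQ_p$ be the element whose RCF is the purely periodic $[\overline{a_0,a_1}]$, i.e.\ the root of $a_1X^2-a_0a_1X-a_0=0$ singled out by its convergents. I first need to check that such a root lives in $\QQ_p$: the discriminant factors as $(a_0a_1)^2\bigl(1+\tfrac{4}{a_0a_1}\bigr)$, and because $v_p(4/(a_0a_1))>0$ and $p$ is odd, Hensel's lemma produces a square root. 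Then set
\[ \alpha:=\frac{a_1}{2}+\frac{1}{\beta}. \]
From the quadratic one reads off $tr(\beta)=a_0$ and $N(\beta)=-a_0/a_1$, whence $tr(\alpha)=a_1+tr(\beta)/N(\beta)=0$, while $v_p(\alpha)=v_p(a_1/2)=v_p(a_1)<0$ since $v_p(1/\beta)>0$. Because $a_1/2\in\ZZ[1/p]\cap[0,p)$ and $v_p(\alpha-a_1/2)=v_p(1/\beta)>0$, the Ruban algorithm gives $s(\alpha)=a_1/2$ and $\alpha_1=\beta$, so $\alpha$ has periodic RCF $[a_1/2,\overline{a_0,a_1}]$. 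Varying $(a_0,a_1)$ produces infinitely many distinct $\alpha$'s, since the first three partial quotients of $\alpha$ uniquely recover the pair.

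For the second claim, I argue by contradiction using the Ruban periodicity criterion recalled in the introduction: if the RCF of a quadratic irrational is periodic, then $\QQ(\alpha)$ embeds in $\RR$ and $N(\alpha_n)<0$ for all $n$ sufficiently large. Assume $\alpha=p^k\sqrt m$ has periodic RCF; then $m>0$, and I fix the real embedding with $\sqrt m>0$, so $\alpha>0$ and $\alpha^c=-\alpha<0$. Because $v_p(\alpha)=k\geq 1$, the element $\alpha$ lies in $p\ZZ_p$, and Ruban's $s$-function yields $a_0=0$; hence $\alpha_1=1/(p^k\sqrt m)$ with $\alpha_1>0$ and $\alpha_1^c=-\alpha_1<0$. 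The key inequality is
\[ \alpha_1=\frac{1}{p^k\sqrt m}<\frac{1}{p^k}\leq a_1, \]
where the left inequality uses $m\geq 2$ (as $m$ is a positive non-square integer) and the right follows from $a_1=\tilde a_1/p^k$ with $\tilde a_1\geq 1$. So $\alpha_1-a_1<0$ and $\alpha_1^c-a_1<0$, giving $\alpha_2,\alpha_2^c<0$. Since $a_n>0$ for all $n\geq 1$ in RCF (because $v_p(\alpha_n)<0$ forces $s(\alpha_n)\geq 1/p^{-v_p(\alpha_n)}>0$), whenever $\alpha_n,\alpha_n^c<0$ one still has $\alpha_n-a_n<0$ and $\alpha_n^c-a_n<0$, hence $\alpha_{n+1},\alpha_{n+1}^c<0$. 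By induction $N(\alpha_n)=\alpha_n\alpha_n^c>0$ for every $n\geq 2$, contradicting the Ruban criterion.

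The delicate step in the first part is certifying that the formal symbol $[\overline{a_0,a_1}]$ defines an actual element of $\QQ_p$, which is handled by the Hensel argument on the discriminant. In the second part the key structural point is the absorbing nature of the real region $\{\alpha_n<0,\,\alpha_n^c<0\}$ under the RCF dynamics, forced by the positivity of the partial quotients: once entered at $n=2$ it is never left, which pins $N(\alpha_n)$ to the wrong sign for all subsequent $n$ and rules out periodicity.
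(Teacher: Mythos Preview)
Your proof is correct and follows the same overall strategy as the paper. For the second claim the arguments coincide: both show that from $n=2$ onward the two real embeddings of $\alpha_n$ are negative and then invoke the periodicity criterion of Capuano--Veneziano--Zannier; you spell out the absorbing-region induction (once $\alpha_n,\alpha_n^c<0$ and $a_n>0$, the same holds at $n+1$), which the paper leaves implicit in its citation of \cite{CVZ2019}. For the first claim the paper takes a slightly more direct route, writing down the one-parameter family $\alpha_h=\delta/p^h$ with $\delta$ a square root of $1+p^h$ and verifying by hand that its RCF is $[1/p^h,\overline{2/p^h}]$, i.e.\ with a period-one tail; your construction $\alpha=a_1/2+1/\beta$ with $\beta=[\overline{a_0,a_1}]$ is a bit more elaborate and yields period-two tails $[a_1/2,\overline{a_0,a_1}]$, but it is equally valid and arguably more flexible.
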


\begin{proof}
For the first statement, we exhibit an example of a family of $\alpha$ of the first type such that the RCF expansion is periodic. Indeed, for $h\ge 0$ let us consider $\delta \in \QQ_p$ the square root of $1+p^h$ which is congruent to $1$ modulo $p$ and let us take $\alpha_h=\frac{\delta}{p^h}$; 
then, the Ruban continued fraction expansion of $\alpha_h$ is equal to $\left [ \frac{1}{p^h}, \overline{\frac{2}{p^h}} \right ]$. To prove this, notice that $a_0=\frac{1}{p^h}$, hence
$$ \alpha_1= \frac{1}{p^h}+\alpha; $$
from this we have $a_1=\frac{2}{p^h}$ and $\alpha_1=\alpha_2$, which proves the desired periodicity.
\\

To prove the second statement, consider $\alpha=p^k\sqrt{m}\in \QQ_p$ with $k>0$; then, $v_p(\alpha)>0$, hence $a_0=0$ and $\alpha_1=\frac{1}{\alpha}=\frac{\sqrt{m}}{p^k m}$ with $v_p(\alpha_1)<0.$ Now $a_1$ will be of the form $\frac{\tilde{a}_1}{p^k}$ for some $0< \tilde{a}_1 \le p^{k+1}$. Then we have that 
$$ \alpha_2= p^k \frac{\sqrt{m}+\tilde{a}_1m}{(1-\tilde{a}_1^2m)}, $$
where we recall that from Ruban algorithm we have that $p^k \mid (1-\tilde{a}_1m)$. But now if we consider the two real embeddings of $\alpha_2$ this are both negative, which applying \cite[Theorem 6.5]{CVZ2019} gives that $\alpha_2$ is not periodic as wanted.
\end{proof}

\section{Some further periodic expansions} \label{sec:further}

In this section, we give some other families of quadratic irrationals having periodic expansions of period $4$ and $6$, which are not obtained applying the construction given in Theorem \ref{prop:tuno}. 



\begin{theorem}
\ 
\begin{enumerate}
\item For $p \geq 3$ and $t \geq 2$, we have
\[ \cfrac{\sqrt{1-p^{t+2}}}{2p} = \pm\left[\cfrac{p^2-1}{2p}, \overline{-\cfrac{2}{p}, -\cfrac{1}{p^{t-1}}, \cfrac{2}{p}, -\cfrac{1}{p}}\right]; \]
\item For $p \geq 5$ and $t \geq 3$, we have
\[ \cfrac {\sqrt{p^t+1}} 2 =\pm\left [-\frac {p-1} 2, \frac 2 p,\overline{ -\frac 1 {p^{t-2}}, 1-\frac 2 p, -1-\frac 2 p, \frac 1 {p^{t-2}}, -1+\frac 2 p, 1+\frac 2 p}\right ]; \]
\item For $p \geq 5$ and $t \geq 3$, we have
\[ \cfrac {\sqrt{p^t+1}} {2p^{t-2}}=\pm \left [\cfrac {p^{t-1}+1}{2p^{t-2}},\overline{1-\frac 2 p, -1-\frac 2 p, \frac 1 {p^{t-2}}, -1+\frac 2 p, 1+\frac 2 p, -\frac 1 {p^{t-2}}}\right ]. \]
\end{enumerate}
\end{theorem}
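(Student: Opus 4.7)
The three identities have parallel structure: each claims that a specific quadratic element of $\QQ_p$, of the form $(b_0+\delta)/(p^{k_0}c_0)$ with $\delta=\sqrt{\Delta}$, admits a prescribed (eventually periodic) Browkin expansion. The $\pm$ sign reflects the two choices of $\delta$, each producing the stated expansion up to overall sign. My plan is a direct verification of the Browkin algorithm \eqref{eq:algo}, case by case.

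First, I fix $\delta\in\QQ_p$ as the square root of $\Delta$ congruent to $1$ modulo a sufficiently high power of $p$. In case (1), $\Delta=1-p^{t+2}\equiv 1\pmod{p^{t+2}}$; in cases (2) and (3), $\Delta=p^t+1\equiv 1\pmod{p^t}$. Hensel's lemma guarantees such a $\delta$, giving $\delta\equiv 1$ to high enough precision that later congruences go through. Fixing the sign in this way specifies which of the two roots of the quadratic is intended; the opposite choice yields the minus sign in the identity.

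Second, I compute the complete quotients $\alpha_0=\alpha,\alpha_1,\ldots,\alpha_N$ explicitly through one full period ($N=4$ in (1), $N=6$ in (2)--(3)), writing each $\alpha_n$ in the form $(b_n+\delta)/(p^{k_n}c_n)$ with $b_n,c_n\in\ZZ$ and $p\nmid c_n$, and tracking their evolution under \eqref{eq:recurrenciesbc}. At every step I verify three things: (i) the proposed $a_n$ lies in $\mathcal{Y}$, i.e.\ $a_n\in\ZZ[1/p]$ with $|a_n|_\infty<p/2$; (ii) $v_p(\alpha_n)=-k_n$ matches the prescribed valuation; (iii) $s(\alpha_n)=a_n$, which, given (i) and (ii), reduces to showing that $\alpha_n-a_n$ has strictly positive $p$-adic valuation. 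This last check is where the hypotheses $t\geq 2$ in (1) and $t\geq 3$ in (2), (3) enter: they guarantee that $\delta\equiv 1\pmod{p^{\lceil t/2\rceil+1}}$ to sufficient depth to match the integer part of $\alpha_n$ in $\mathcal{Y}$. After $N$ steps one observes that $\alpha_N$ coincides with the complete quotient at the start of the period, closing the cycle and establishing both periodicity and the identification of $\alpha$ with the claimed square root (via the quadratic obtained from $\alpha_N=\alpha_{\text{start}}$ through \eqref{eq:alpha}, which necessarily coincides with $X^2-\Delta/(2p^{?})^2=0$).

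\textbf{Main obstacle.} The difficulty is purely bookkeeping rather than conceptual: the partial quotients $-1/p^{t-1}$, $1\pm 2/p$, $1/p^{t-2}$ have varying $p$-adic valuations, so the iteration $\alpha_{n+1}=1/(\alpha_n-a_n)$ produces intermediate $b_n,c_n$ whose simplification is delicate, and each of the $N$ steps must be verified separately in each of the three cases. I would organize the verification as a table listing $(\alpha_n, a_n, \alpha_n-a_n)$ for $n=0,\ldots,N$, reading off (ii) and (iii) from explicit valuations. The palindromic symmetry of the period (as in \eqref{eq:tr0palindromo} and Proposition \ref{prop:dt}) cuts the work roughly in half: it suffices to compute the first half of the period and then use the symmetry $-1/\alpha_{n+1}^c=[a_n,\ldots,a_0,-1/\alpha_0^c]$ from Proposition \ref{prop:svilconj} to obtain the remaining quotients. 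Once the table is in place, closure of the period and matching of $\alpha$ to the stated square-root value are immediate.
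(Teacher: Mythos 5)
Your plan --- run Browkin's algorithm \eqref{eq:algo} forward on the explicit $\alpha$, write each complete quotient as $(b_n+\delta)/(p^{k_n}c_n)$, check $a_n\in\mathcal{Y}$ and $s(\alpha_n)=a_n$ at every step, and close the cycle after one period --- is sound and would prove the theorem, but it is genuinely different from the paper's argument. The paper works backwards from the expansion: it forms the product $M=\mathcal{A}_1\cdots\mathcal{A}_N$ of the period matrices, computes its characteristic polynomial and eigenvalues $\mu_{1,2}$, identifies the value of the purely periodic tail as $\alpha_1=(\mu_2-M_{11})/M_{21}$ (the $p$-adic limit of $(A\mu_1^n+B\mu_2^n)/(C\mu_1^n+D\mu_2^n)$, with $\mu_1$ the eigenvalue of larger $p$-adic norm), and then checks by direct calculation that prepending the pre-periodic quotients yields the claimed square root; that this periodic expansion is in fact \emph{the} Browkin expansion is automatic because every listed $a_n$ lies in $\mathcal{Y}$ with $|a_n|_p>1$, which forces $a_n=s(\alpha_n)$. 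The eigenvalue route replaces your $N$ separate valuation/congruence checks by a single $2\times 2$ characteristic-polynomial computation, at the cost of justifying the limit formula; your route stays entirely inside the algorithm and needs no convergence argument beyond \eqref{eq:v_p}, but requires more bookkeeping.

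One caveat: the palindromy shortcut you invoke does not apply to these examples. The periods here are \emph{not} of the form \eqref{eq:tr0palindromo}; in case (1), $a_0=\frac{p^2-1}{2p}$ has $|a_0|_\infty>\frac p4$, so $2a_0\notin\mathcal{Y}$ and the derivation of \eqref{eq:tr0palindromo} (which requires $|a_0|_\infty<\frac p4$) fails --- indeed the period $\left(-\frac2p,\,-\frac1{p^{t-1}},\,\frac2p,\,-\frac1p\right)$ is visibly not palindromic, and the paper isolates these families in a separate section precisely because they are not produced by the palindromic construction of Theorem \ref{prop:tuno}. This does not invalidate your approach, but it means you must carry out all $N$ steps of the table rather than half of them.
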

\begin{proof}
The period of this continued fraction converges to an irrational $\alpha_1$ that can be evaluated considering the matrix
\[ M = \begin{pmatrix} M_{11} & M_{12} \cr M_{21} & M_{22} \end{pmatrix} = \begin{pmatrix} -\frac{2}{p} & 1 \cr 1 & 0 \end{pmatrix}\begin{pmatrix} -\frac{1}{p^{t-1}} & 1 \cr 1 & 0 \end{pmatrix}\begin{pmatrix} \frac{2}{p} & 1 \cr 1 & 0 \end{pmatrix}\begin{pmatrix} -\frac{1}{p} & 1 \cr 1 & 0 \end{pmatrix}. \]
Its characteristic polynomial is $x^2 + 2 \left(\frac{2}{p^{t+2}}-1\right)x+1$, and the eigenvalues are
\[ \mu_{1,2} = 1 - 2 p^{-t-2} \pm \sqrt{p^{-2t-4} - p^{-t-2}}. \]
Then
\[\alpha_1 = \lim_{n} \cfrac{A \mu_1^n + B \mu_2^n}{C\mu_1^n + D\mu_2^n} = \cfrac{A}{C},\]
where the limit is computed with respect to the $p$-adic norm and $\mu_1$ is the eigenvalue with larger $p$--adic norm. Moreover, the coefficients $A, B, C, D$ are the solutions of the systems
\[
\begin{cases} A + B = 1 \cr A\mu_1 + B\mu_2 = M_{11} \end{cases},\quad \begin{cases} C + D = 0 \cr C\mu_1 + D\mu_2 = M_{21} \end{cases},
\]
from which
\[ \alpha_1 = \cfrac{\mu_2 - M_{11}}{M_{21}}, \]
and the limit of the continued fraction is $\alpha_0 = \cfrac{p^2-1}{2p} + \cfrac{1}{\alpha_1}$. By direct calculation one can check that $\alpha_0 = \cfrac{1-p^{t+2}}{2p}$.
For cases 2. and 3. the proof is similar once the suitable matrix $M$ is considered.
\end{proof}

\end{document}